\theoremstyle{definition}
 \newtheorem{theorem}{Theorem}
 \newtheorem{proposition}[equation]{Proposition}
 \newtheorem{definition}[equation]{Definition}
 \newtheorem{remark}[equation]{Remark}
 \newtheorem{lemma}[equation]{Lemma}
 \newtheorem{corollary}[equation]{Corollary}
 \newtheorem{example}[equation]{Example}
\newtheorem{question}[equation]{Question}
\newcommand\e{\varepsilon}
\newcommand{\Add}{\mathrm{Add}}
\newcommand\RR{\mathbb R}
\newcommand\A{\mathcal A}
\newcommand\ZZ{\mathbb Z}
\newcommand\QQ{\mathbb Q}
\newcommand\NN{\mathbb N}
\newcommand\B{\mathcal B}
\newcommand{\C}{\text{C}}
\newcommand{\p}{{\bf p}}
\numberwithin{equation}{section}
\begin{document}

\title{Shrinking dynamics on multidimensional tropical series}

\author[N. Kalinin]{Nikita Kalinin}

 \address{Guangdong Technion Israel Institute of Technology (GTIIT),
241 Daxue Road, Shantou, Guangdong Province 515603, P.R. China,  Technion-Israel Institute of Technology, Haifa, 32000, Haifa District, Israel, nikaanspb@gmail.com}

\begin{abstract}
We define multidimensional tropical series, i.e. piecewise linear functions which are tropical polynomials locally but may contain an infinite number of monomials. Tropical series appeared in the study of the growth of pluriharmonic functions. However, our motivation stems from sandpile models where certain wave dynamic governs the behavior of sand and exhibits a power law (so far only experimental evidence). In this paper we lay the groundwork for tropical series and corresponding tropical analytical hypersurfaces in the multidimensional setting. 
The main object of study is an $\Omega$-tropical series where $\Omega$ is a compact convex domain which can be thought of as the region of convergence of such a series.

Our main theorem is that the sandpile dynamics producing an $\Omega$-tropical analytical hypersurface passing through a given finite set of points can always be slightly perturbed so that the intermediate $\Omega$-tropical analytical hypersurfaces have only mild singularities.\\ 

{\bf keywords:} Tropical curves, sandpile model, tropical dynamics, tropical series, polytopes, convex geometry.\\

37B99, 14T15, 11S82, 37E15, 37P50
\end{abstract}

\maketitle

In this article, we develop the theory of tropical series on domains in $\RR^n$ and establish results intended for a forthcoming paper on sandpile model. Our initial motivation came from \cite{firstsand} where it was experimentally observed that tropical curves appear in two-dimensional sandpile models and behave nicely when we add more sand. In subsequent papers we establish similar results for higher dimensional tropical surfaces. 

We experimentally found, \cite{sandcomputation}, that the dynamic generated by shrinking operators on the space of tropical series in two-dimensional case obeys a power law. Namely, the distribution of the area of an avalanche (a direct analog of that for sandpiles) in this model has the density function of the form $p(x) = cx^{\alpha}$. To the best of our knowledge, this simple geometric dynamic is the only model, among the ways to obtain power laws in a simulation, which produces a continuous random variable. Closely related operators, referred as ``breathing modes''  appear in works of C. Vafa, see \cite{vafa2012supersymmetric}.

Tropical series have previously appeared in studies on the growth of plurisubharmonic functions \cite{MR743626},\cite{kiselman2014questions}, Section 5, and \cite{Abakumov2017}. Tropical series in one variable can be studied in the context of ultradiscretization of differential equations, see \cite{tohge2014order} and references therein. See also \cite{MR2482129},\cite{MR2795727},\cite{korhonen2015tropical} for tropical Nevalinna theory. One-dimensional tropical series were used in automata-theory in \cite{lahaye2015compositions}, \cite{lombardy2006sequential}.

For a general introduction to tropical geometry, see \cite{BIMS}, \cite{mikh2}, or \cite{Brug}. This paper extends the results of \cite{us_series} to higher dimensions.

Similar tropical dynamical phenomena were recently investigated in the plane by Mikhalkin and Shkolnikov \cite{mikhalkin2023wave}, whose framework motivates our multidimensional extension.
\section*{Outline of the Paper}

The main result of the paper establishes that for any finite set of points in $\Omega$, one can perturb the sandpile dynamics slightly to produce an $\Omega$-tropical analytical hypersurface passing through the given points and having only mild singularities. By \emph{mild singularities}, we mean that the dual cell in the Newton polytope contains no lattice points other than its vertices.

The structure of the paper is as follows. In Section~\ref{sec_series} we define tropical series, in Section~\ref{sec_omegaseries}, we define $\Omega$-tropical series, tropical analytical hypersurfaces. Section~\ref{sec_distance} establishes foundational results about tropical distance function. In Section~\ref{sec_gp}, we study the shrinking operators modeling sandpile evolution and show how they modify the tropical series while preserving convergence. Section~\ref{sec_flow} contains a flow version of the shrinking operators. In Section~\ref{sec_dynamic} we study the sandpile dynamic generated by shrinking operators. In Section~\ref{sec_mild} we introduce mild singularities. In Section~\ref{sec_exhausting} we show that any convex domain can be approximated by $\QQ$-polytopes. In Section~\ref{sec_main} we prove the main theorem.

{\bf Acknowledgments.} We thank Andrea Sportiello for sharing his insights on perturbative
regimes of the Abelian sandpile model which initiated our work on sandpiles -- initially in two-dimensional case and now extended to higher dimensions. 

\section{Tropical series}
\label{sec_series}
Recall that a tropical Laurent polynomial (later just {\it tropical polynomial})  $f$ on $U\subset\RR^n$ in $n$ variables is a function  $f: U\to \RR$ that can be written as \begin{equation}\label{eq_Ftroppoli}
f(z)=\min_{q\in \A} (z\cdot q+a_q), a_q\in\RR, z\in U
\end{equation} where $\A$ is
a {\bf finite} subset of $\ZZ^n$. Each point $q=(q_1,q_2,\dots,q_n)\in\A$ corresponds to a {\it tropical monomial} $q_1z_1+q_2z_2+\dots q_nz_n+a_{q}, z=(z_1,z_2,\dots,z_n)\in U$, the number $a_q$ is
called {\it the coefficient} of the monomial associated with $q\in\A$.  The locus of the points in $U^\circ$ (the interior of $U$) where a tropical polynomial $f$ is not
smooth is called a {\it tropical hypersurface} (see \cite{mikh2}). We denote this locus by $C(f)\subset U^\circ$. 

The subgraph of $f:U\to \RR$ is a convex polyhedron and the tropical hypersurface $C(f)$ consists of the projection onto $U$ of the codimension-one faces of the graph of $f$. Equivalently, $C(f)$ consists of the points $z\in U$ such that there exist $q_1,q_2\in\A$ such that $f(z) = q_1\cdot z+a_{q_1}=q_2\cdot z+a_{q_2}$.

\begin{definition}
\label{def_tropseries}
Let $U\subset\RR^n, U^\circ\ne\varnothing$. A continuous function $f:U\to\RR$ is called {\it a tropical series} if for each point $z_0\in U^\circ$ there exists an open neighborhood $W\subset U$ containing $z_0$ such that the restriction $f|_W$ is a tropical polynomial. 
\end{definition}

\begin{definition}[Cf. Definition~\ref{def_tropicalanalitycal}]
A {\it tropical analytic hypersurface} in $U$ is the locus of non-linearity
of a tropical series $f$ on $U^\circ$. We denote this hypersurface by $C(f)\subset U^\circ$. Equivalently, $C(f)$ is the set of points in $U^\circ$ at which $f$ is not smooth.
\end{definition}

\begin{example}  Tropical $\Theta$-divisors (see \cite{mikhalkin2006tropical}) are examples of tropical analytic curves in $\RR^2$. Another simple example is the union of all horizontal and vertical lines passing through lattice points in $\RR^2$, i.e. the set $$C=\bigcup\limits_{k\in \ZZ}\{(k,y)|y\in\RR\}\cup \{(x,k)|x\in\RR\}.$$ 
\end{example}

The following example illustrates that a tropical series on $\Omega^\circ$ in general cannot be
extended to $\partial\Omega$. 
\begin{example}
Consider a tropical analytic curve $C$ in the square $(0,1]\times[0,1]$, presented as $$C=\bigcup_{n\in\NN}\Bigl\{\big(1/n,y\big)|y\in[0,1]\Bigl\}\cup \Bigl\{\big(x,1/2\big)|x\in(0,1]\Bigl\}.$$
Any tropical series $f$ satisfying $C(f)=C$, must satisfy $f(x,y)\to-\infty$ as $x\to 0$, hence it cannot be continuously extended to $\partial ((0,1]\times[0,1])$.
\end{example}

\begin{question}
What can we say about a set of points in $\partial\Omega$ where a tropical series from $\Omega^\circ$ can be extended?  Can it have an infinite number of connected components? Could it exhibit a fractal-like structure?

\end{question}

Tropical series on non-convex domains exhibit the behaviour as in the following example.

\begin{example}
The function $f(x,y)=\min(3,x+[y])$ (where $[y]$ denotes the floor of $y$) is a tropical series on the following $U$: $$U=(U_1\cup U_2)^\circ, U_1=\big([0,5]\times[0,1]\big)\cup \big([4,5]\times [1,2]\big), U_2=\big([0,5]\times[2,3]\big)\cup \big([4,5]\times [1,2]\big).$$ On $U_1^\circ$ the restriction is $f|_{U_1^\circ}=\min (3, x)$, whereas on $U_2^\circ$, we have $f|_{U_2^\circ}=\min(3,x+2)$ and the monomial $x$ appears with different coefficients -- $0$ and $2$ in the different parts of $U$. 
\end{example}

For this reason, we henceforth restrict our attention to tropical series on convex domains.
\section{$\Omega$-tropical series}
\label{sec_omegaseries}

\begin{definition}
\label{def_tropicalanalitycal} 
An $\Omega$-{\it tropical series} on a convex closed set $\Omega\subset \RR^n$ with non-empty interior is a function
$f:\Omega\to\RR_{\geq 0}$ satisfying $f|_{\partial\Omega}= 0$,  such that
\begin{equation}
\label{eq_omegatrop}
f(z)=\inf\limits_{q\in\A}(q\cdot z+c_q), c_q\in\RR,
\end{equation}
 and
$\A\subset\ZZ^n$ is not necessarily finite. An $\Omega$-{\it
tropical analytic hypersurface} $C(f)$ on $\Omega^\circ$ is the corner locus
(i.e. the set of non-smooth points) of an $\Omega$-tropical series $f$ on $\Omega^\circ$. 
\end{definition}

\begin{question} An $\Omega$-tropical series can be thought of as an analog of a series  $f_t(z)=\sum_{q\in \A}t^{c_q} z^q$ with $t\in\RR_{>0}$ very small. Is it true that $\Omega^\circ$ is the limit of the images of the region of convergence of $f_t$ under the coordinatewise logarithmic map $\log_t: (\mathbb C^*)^n\to \RR^n,  z\to (\log_t|z_1|,\dots,\log_t|z_n|)$, and the corresponding $\Omega$-tropical analytic hypersurface is the limit of the images of the zero loci $\{f_t(z)=0\}$ under $\log_t|\cdot|$ when $t\to 0$? While this holds locally, the behavior near $\partial\Omega$ remains unclear.
\end{question}

\begin{lemma}
\label{lemma_estimate} Let $U\subset\RR^n$ be an open set and $K\subset U$ be a compact set. For any  $A>0$ the set $$\mathcal{M}=\big\{q\in\ZZ^n| \exists  a_q\in \RR, (q\cdot z+a_q)|_U\geq 0, \exists z_0\in K, (q\cdot z_0+a_q)\leq A\big\},$$
i.e. the set of monomials, that could potentially contribute on $K$ to an $\Omega$-tropical function $f$ with $\max_K f\leq A$, is finite.
\end{lemma}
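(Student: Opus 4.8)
The plan is to exploit the two constraints defining the set — nonnegativity of the affine function $q\cdot z + a_q$ on all of $U$, and the upper bound $A$ at some point $z_0 \in K$ — to confine the slope $q$ to a bounded region of $\RR^n$ (hence, being integral, a finite set). First I would fix a small $r > 0$ such that the $r$-neighborhood $K_r$ of $K$ is still contained in $U$; this is possible since $K$ is compact and $U$ is open. Now suppose $q \in \ZZ^n$ and $a_q \in \RR$ satisfy the two conditions, with the witness point $z_0 \in K$. For any unit vector $u \in \RR^n$, the point $z_0 + r u$ lies in $K_r \subset U$, so nonnegativity gives $q\cdot(z_0 + ru) + a_q \geq 0$, i.e. $q \cdot (ru) \geq -(q\cdot z_0 + a_q) \geq -A$, using the upper bound at $z_0$. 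Thus $q \cdot u \geq -A/r$ for every unit vector $u$; applying this with $u$ and $-u$ gives $|q\cdot u| \leq A/r$ for all unit $u$, whence $|q| \leq A/r$.

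The conclusion follows immediately: the set in question is contained in $\{q \in \ZZ^n : |q| \leq A/r\}$, which is finite because a ball of fixed radius in $\RR^n$ contains only finitely many lattice points.

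I do not expect a genuine obstacle here; the statement is essentially a compactness/boundedness observation. The only mild care needed is the choice of $r$: one should check that $\dist(K, \RR^n \setminus U) > 0$, which holds since $K$ is compact, $\RR^n \setminus U$ is closed, and the two are disjoint (if $U = \RR^n$ the bound is vacuous and the set is empty unless one interprets the hypothesis appropriately, but in the intended application $U$ is a bounded neighborhood inside $\Omega^\circ$, so this degenerate case does not arise). One could also phrase the argument slightly differently — picking $n+1$ affinely independent points of $U$ near $z_0$ and bounding $q$ via the resulting linear system — but the unit-vector argument above is the cleanest and gives the explicit bound $|q| \leq A/r$.
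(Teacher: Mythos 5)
Your proof is correct and follows essentially the same route as the paper: both arguments take $r=\dist(K,\RR^n\setminus U)>0$ and use nonnegativity at a point shifted from $z_0$ by distance $r$ (the paper moves in the direction $-q/|q|$ directly, you quantify over all unit vectors) to obtain the same bound $|q|\leq A/r$, hence finiteness. The only cosmetic difference is the degenerate case $U=\RR^n$, where the paper simply notes the set is $\{0\}$ rather than treating it as excluded.
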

\begin{proof} If $U=\RR^n$, then $\mathcal{M}$ contains only $0\in\ZZ^n$. So, let $R>0$ denote the distance between $K$ and $\RR^n\setminus U$. Then $(q\cdot z+a_q)|_K\geq R\cdot |q|$ for any $q\in\ZZ^n\setminus 0$ and $a_q$ such that $(q\cdot z+a_q)|_U\geq 0$. Therefore, $|q|\leq A/R$ for all $q\in\mathcal{M}.$
\end{proof}
In the definition of an $\Omega$-tropical series $f$, due to local finiteness ensured by Lemma~\ref{lemma_estimate}, we can 
replace ``$\inf$'' with ``$\min$'', as we prove in the following lemma.
\begin{lemma}
\label{lemma_welldefinedseries}
At every point $z\in\Omega^\circ$ we have $$\inf\limits_{q\in\A}(q\cdot z+c_q)=\min\limits_{q\in\A}(q\cdot z+c_q).$$
\end{lemma}
\begin{proof}
\label{proof_welldefinedseries}
Suppose that for a point $z_0\in\Omega^\circ$ and  for each $q\in\A$ the value of the monomial $c_q+q\cdot z$ is greater than the infimum  $$\inf\limits_{q\in\A}(q\cdot z_0+c_q).$$  Thus, there exists $C>0$ such that we have $c_q + q\cdot z_0<C$ for infinite number of monomials $q\in\A$.  Since $(c_q+q\cdot z)|_\Omega\geq 0$ for all $q\in\A$, applying Lemma~\ref{lemma_estimate} yields a contradiction. 
\end{proof}
At a point on $\partial\Omega$ where there is no tangent plane with a rational slope one must use  the infimum rather than the minimum, cf. Lemma~\ref{lemma_lomegaiszero}. 

Applying Lemma~\ref{lemma_estimate} to small compact neighborhoods of points we obtain the following result.

\begin{corollary}

An $\Omega$-tropical series (Definition~\ref{def_tropicalanalitycal}) is a tropical series on $\Omega$ in the sense of Definition~\ref{def_tropseries}.
\end{corollary}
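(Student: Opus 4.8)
The plan is to verify Definition~\ref{def_tropseries} directly: from the possibly infinite index set $\A$ we extract, near any prescribed interior point, a finite sub-collection of monomials that already computes $f$ there, and then we check that $f$ is automatically continuous.

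First fix $z_0\in\Omega^\circ$ and choose $r>0$ so small that the closed ball $K:=\overline{B(z_0,r)}$ lies in $\Omega^\circ$; put $U:=\Omega^\circ$, so that $K\subset U$ is compact and $U$ is open. Two preliminary observations. (i) Every monomial is nonnegative on $\Omega$: if $q\cdot z_1+c_q<0$ for some $q\in\A$ and $z_1\in\Omega$, then $f(z_1)\le q\cdot z_1+c_q<0$, contradicting $f\ge 0$; in particular $(q\cdot z+c_q)|_U\ge 0$ for all $q\in\A$. (ii) $f$ is bounded above on $K$: the set $\A$ is nonempty since $f$ is real-valued, and any single $q_0\in\A$ gives $f\le q_0\cdot z+c_{q_0}$ on $K$, so $A:=\max_K f<\infty$.

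Now apply the two lemmas already at hand. By Lemma~\ref{lemma_welldefinedseries} the infimum defining $f(z)$ is attained at each $z\in K\subset\Omega^\circ$, say by $q(z)\in\A$, so $q(z)\cdot z+c_{q(z)}=f(z)\le A$ while $(q(z)\cdot z+c_{q(z)})|_U\ge 0$ by (i). Hence every $q(z)$ lies in the set of Lemma~\ref{lemma_estimate} (taken with this $K$, this $U$, this $A$, and $a_q:=c_q$), which is finite; set $\A_K:=\{q(z):z\in K\}$, a finite subset of $\A$. For $z\in K$ we have $\min_{q\in\A_K}(q\cdot z+c_q)\le q(z)\cdot z+c_{q(z)}=f(z)$ and, since $\A_K\subseteq\A$, also $\min_{q\in\A_K}(q\cdot z+c_q)\ge\inf_{q\in\A}(q\cdot z+c_q)=f(z)$; therefore $f|_K=\min_{q\in\A_K}(q\cdot z+c_q)$, a tropical polynomial. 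Taking $W:=B(z_0,r)$ then furnishes the open neighbourhood of $z_0$ demanded by Definition~\ref{def_tropseries}.

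Finally, $f$ is continuous on $\Omega$: on $\Omega^\circ$ this is immediate from the local description just obtained, and at $z_b\in\partial\Omega$ the inequality $f\ge 0=f(z_b)$ gives lower semicontinuity while $f$, being an infimum of continuous affine functions, is upper semicontinuous on all of $\Omega$. I do not expect a genuine obstacle here; the only points needing attention are that Lemma~\ref{lemma_estimate} must be invoked with an open set $U\supset K$ on which \emph{all} the monomials are nonnegative (which is why the nonnegativity in step~(i) is recorded on $U=\Omega^\circ$, not merely on $K$), and that the finite family $\A_K$ must reproduce $f$ on $K$ exactly rather than bound it from one side only — both of which are settled by the elementary two-sided inequality above.
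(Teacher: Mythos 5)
Your argument is correct and is exactly the paper's intended proof: the paper disposes of this corollary in one line by ``applying Lemma~\ref{lemma_estimate} for small compact neighbors of points,'' and your write-up is simply the careful expansion of that (nonnegativity of all monomials on $\Omega^\circ$, boundedness of $f$ on a compact ball, Lemma~\ref{lemma_welldefinedseries} to realize the infimum, Lemma~\ref{lemma_estimate} to extract the finite subfamily, and the two-sided inequality). No discrepancy with the paper's route.
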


\begin{lemma}
\label{lemma_usualisomegatropical}
Suppose that $\Omega$ is a convex set, and let $f:\Omega\to\RR$ be continuous and satisfy two conditions: 1) $f|_{\Omega^\circ}$ is a tropical series, and 2) $f|_{\partial\Omega}=0$. Then $f$ is an $\Omega$-tropical series (Definition~\ref{def_tropicalanalitycal}).
\end{lemma}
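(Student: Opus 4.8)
The plan is to read the representation off from convexity: show first that $f$ is concave on $\Omega$, then that every interior point carries a supporting affine function with \emph{integer} slope (this, rather than mere concavity, is where the tropical hypothesis is used), observe that such supporting functions are automatically global majorants of $f$, collect them into an infimum, and finally check that the infimum also produces the correct zero values on $\partial\Omega$.

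By Definition~\ref{def_tropseries}, near each $z_0\in\Omega^\circ$ we may write $f=\min_{q\in\A_{z_0}}(q\cdot z+a_q)$ with $\A_{z_0}\subset\ZZ^n$ finite, so $f$ is concave in a neighbourhood of every point of $\Omega^\circ$; since $\Omega^\circ$ is convex, a function concave near every point of $\Omega^\circ$ is concave on $\Omega^\circ$ (restrict to a segment and use that a locally concave function of one variable is concave), and by continuity together with $f|_{\partial\Omega}=0$ it is concave on all of $\Omega$. When $\Omega$ is bounded this already forces $f\geq 0$, since each point of $\Omega^\circ$ is a convex combination of two boundary points.

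Fix $z_0\in\Omega^\circ$ and pick $q_0\in\A_{z_0}$ realising the minimum at $z_0$; then $\ell(z)=q_0\cdot z+a_{q_0}$ satisfies $\ell(z_0)=f(z_0)$, $\ell\geq f$ near $z_0$, and $q_0\in\ZZ^n$. I claim $\ell\geq f$ on all of $\Omega$: otherwise $\ell(z_1)<f(z_1)$ for some $z_1\in\Omega$, and along $z_t=(1-t)z_0+tz_1$ (which lies in $\Omega^\circ$ for $t\in(0,1)$) we would have $\ell(z_t)\geq f(z_t)$ for small $t>0$, while affinity of $\ell$ and concavity of $f$ give $\ell(z_t)-f(z_t)\leq t\bigl(\ell(z_1)-f(z_1)\bigr)<0$, a contradiction. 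For $q\in\ZZ^n$ set $c_q=\sup_{w\in\Omega}\bigl(f(w)-q\cdot w\bigr)$ (always $>-\infty$ since $\Omega\neq\varnothing$), and $\A=\{q\in\ZZ^n:c_q<+\infty\}$; by the claim $c_{q_0}\leq a_{q_0}<+\infty$, so $\A\neq\varnothing$. For every $q\in\A$ and $z\in\Omega$ we have $q\cdot z+c_q\geq f(z)$, hence $g(z):=\inf_{q\in\A}(q\cdot z+c_q)\geq f(z)$; and for $z_0\in\Omega^\circ$ the touching $q_0$ gives $f(z_0)=q_0\cdot z_0+a_{q_0}\geq q_0\cdot z_0+c_{q_0}\geq f(z_0)$, so $g(z_0)=f(z_0)$. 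Thus $g=f$ on $\Omega^\circ$.

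Finally, for $z\in\partial\Omega$ we have $g(z)\geq f(z)=0$; conversely, picking any $z_1\in\Omega^\circ$, the points $z_t=(1-t)z_1+tz$ lie in $\Omega^\circ$ for $t\in(0,1)$, and since $g$, being an infimum of affine functions, is concave on $\Omega$, we get $f(z_t)=g(z_t)\geq(1-t)g(z_1)+tg(z)$; letting $t\to1^-$ and using continuity of $f$ with $f(z)=0$ gives $g(z)\leq 0$, so $g(z)=0=f(z)$. Hence $f(z)=\inf_{q\in\A}(q\cdot z+c_q)$ on all of $\Omega$ with $f|_{\partial\Omega}=0$ and $f\geq 0$, i.e. $f$ is an $\Omega$-tropical series in the sense of Definition~\ref{def_tropicalanalitycal}. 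The only genuinely tropical ingredient, and the step I expect to matter most, is the integrality of the slope $q_0$ of the supporting function at each interior point; everything else is convex analysis, the one slightly delicate point being the passage to the boundary in the last step (and, if unbounded $\Omega$ is allowed, the nonnegativity of $f$ along unbounded directions, which is why one normally takes $\Omega$ compact).
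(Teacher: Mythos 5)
Your proof is correct and follows essentially the same route as the paper's: local monomials are global majorants of $f$ by concavity (itself forced by convexity of $\Omega$ and the local min-of-affine structure), one assembles them into an infimum $g$ that agrees with $f$ on $\Omega^\circ$, and the boundary values are recovered from concavity of $g$ along a segment together with continuity of $f$ at $\partial\Omega$ (the paper phrases this last step as a reflection $2x_i-x$ rather than your one-sided limit $t\to 1^-$, but it is the same estimate). The only cosmetic difference is that you take the optimal constant $c_q=\sup_\Omega(f(w)-q\cdot w)$ over all feasible integer slopes while the paper restricts to the slopes actually occurring locally; your explicit remarks on $f\ge 0$ and on unbounded $\Omega$ are a welcome extra precision that the paper leaves implicit.
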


\begin{proof}
Let $f|_U=q\cdot z+c_q$ for an open $U\subset\Omega^\circ$. It follows from convexity of $\Omega$ and local concavity of $f$ that $f(z)\leq q\cdot z+c_q$ on $\Omega$. Define
$$g(z) = \inf \{q\cdot z+c_q| (q,c_q), \exists{\ open\ }U\subset \Omega^\circ,
 f(z)|_U=q\cdot z+c_q\}.$$
On $\Omega^\circ$ this infimum is actually a minimum and $f=g$. We only need to prove that  $g|_{\partial\Omega}=0$. Suppose the contrary. Let $g(x)=A>0, x\in\partial \Omega$. Consider a sequence of $x_i\to x, x_i\in\Omega^\circ$ then $g(x_i)=f(x_i)\to 0$. But we have that $g(x_i-(x-x_i))\leq g(x_i)-(g(x)-g(x_i))$ because $g$ is the infimum of a set of linear functions. But we may choose $x_i$ such that $2x_i-x\in\Omega^\circ$ and $2g(x_i)-A<0$ leading to a contradiction.
\end{proof}

\section{Tropical distance function}
\label{sec_distance}
Each domain $\Omega$ admits the trivial tropical series, which is everywhere equal to zero, its tropical analytic hypersurface is empty.

Not all convex closed subsets  $\Omega\subset \RR^n$ admit a non-trivial $\Omega$-tropical series, e.g. $\RR^n$, half-space with the boundary of non-rational slope, etc. 

\begin{definition}\label{def_singlesupport}Let $\Omega\subset \RR^n$. For $q\in\mathbb{Z}^n$ denote by $c_q\in\RR\cup\{-\infty\}$ the infimum of $z\cdot q$ over $z\in\Omega$. Let $\A_\Omega$ be the set of $q$ with $c_q\neq -\infty.$ Note that if $\Omega$ is bounded, then $\A_{\Omega}=\ZZ^n$. For each $q\in \A_\Omega$ we define 
\[\label{eq_lij}
l^q_\Omega(z)=z\cdot q-c_{q}.
\]
\end{definition}

Note that $l^q_\Omega$ is positive on $\Omega^\circ$. Also, $\A_{\Omega}$ always contains $0\in \ZZ^n$. To have a non-trivial $\Omega$-tropical series we must have $\A_\Omega\ne \{0\}$. If $\Omega\subset\RR^n$ is a compact set, then $\A_\Omega=\ZZ^n$.

From now on we assume that $\Omega$ is a compact convex subset of $\RR^n$ with non-empty interior.

\begin{definition}
\label{def_generalweighteddistance}
We use the notation of \ref{eq_lij}. The {\it weighted distance function} $l_\Omega$ on $\Omega$ is defined by 
$$l_\Omega(z)=\inf \big\{l^{q}_\Omega(z)\mid q\in \ZZ^n\setminus \{(0)\} \big\}.$$
\end{definition}

\begin{remark}
\label{rem_lomegaestimate}
If $f(z)=q\cdot z+c_q,q\in\ZZ^n\setminus \{0\}, c_q\in\RR$, $f|_\Omega\geq 0$, then $f\geq l_\Omega$ on $\Omega$.
\end{remark}

An argument similar to that as in Lemma~\ref{lemma_welldefinedseries} establishes the following result.
\begin{lemma}
The function $l_\Omega$ is a tropical series in $\Omega^\circ$ (Definition~\ref{def_tropseries}). 
\end{lemma}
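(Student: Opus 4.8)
The plan is to show that every point $z_0 \in \Omega^\circ$ has an open neighborhood $W$ on which the infimum defining $l_\Omega$ is attained by only finitely many of the linear functions $l^q_\Omega$, so that on $W$ the function $l_\Omega$ coincides with the minimum of a finite collection of affine functions, i.e. with a tropical polynomial. First I would fix $z_0 \in \Omega^\circ$ and choose a compact neighborhood $K \subset \Omega^\circ$ of $z_0$, together with an open set $U$ with $K \subset U \subset \Omega^\circ$; since $\Omega$ is compact, $l_\Omega$ is finite and bounded, so set $A = \max_K l_\Omega + 1$ (any crude finite upper bound on $l_\Omega$ over $K$ works, e.g. using the single monomial coming from one nonzero $q \in \A_\Omega$ with $l^q_\Omega$ bounded on $K$). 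The point is that any monomial $l^q_\Omega(z) = q\cdot z - c_q$ satisfies $l^q_\Omega|_\Omega \geq 0$ by construction (Definition~\ref{def_singlesupport} and the remark that $l^q_\Omega$ is positive on $\Omega^\circ$), so a monomial that ever takes a value $\leq A$ at some point of $K$ lies in the finite set furnished by Lemma~\ref{lemma_estimate} (applied with this $U$, $K$, and $A$).

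Next I would argue that on $W := K^\circ$ the infimum $l_\Omega(z) = \inf_{q \in \A_\Omega \setminus\{0\}} l^q_\Omega(z)$ equals the minimum over this finite set $\mathcal{M}$ of monomials. Indeed, for $z \in W$ we have $l_\Omega(z) \leq A$, and any $q \notin \mathcal{M}$ has $l^q_\Omega(z') > A$ for all $z' \in K$, in particular $l^q_\Omega(z) > A \geq l_\Omega(z)$, so such $q$ cannot improve the infimum; hence $l_\Omega(z) = \min_{q \in \mathcal{M}} l^q_\Omega(z)$ for all $z \in W$. This exhibits $l_\Omega|_W$ as a tropical polynomial in the sense of \eqref{eq_Ftroppoli}, which is exactly what Definition~\ref{def_tropseries} requires. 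Continuity of $l_\Omega$ is then immediate, being locally a minimum of finitely many affine functions.

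The only genuinely delicate point — and it is the one the paper signals by saying ``the same argument as in the proof of Lemma~\ref{lemma_welldefinedseries}'' — is verifying that $l_\Omega$ admits a finite upper bound on the chosen compact $K$, so that Lemma~\ref{lemma_estimate} can be invoked; this requires knowing $\A_\Omega \neq \{0\}$, which is where the standing hypothesis that $\Omega$ is compact (so $\A_\Omega = \ZZ^n$) does the work. Everything else is a routine repackaging of Lemma~\ref{lemma_estimate}: the finiteness of the relevant monomial set is the substance, and once that is in hand the local-tropical-polynomial conclusion follows formally. I do not expect any serious obstacle; the statement is essentially a corollary of Lemma~\ref{lemma_estimate} together with the positivity of the $l^q_\Omega$ on $\Omega^\circ$.
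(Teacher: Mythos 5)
Your proposal is correct and follows essentially the same route as the paper: the paper's proof is just the remark that ``the same argument as in the proof of Lemma~\ref{lemma_welldefinedseries}'' applies, i.e.\ one bounds $l_\Omega$ on a compact neighborhood $K\subset\Omega^\circ$ and invokes Lemma~\ref{lemma_estimate} to conclude that only finitely many monomials $l^q_\Omega$ can realize the infimum on $K$, so $l_\Omega$ is locally a tropical polynomial. Your write-up merely makes this explicit (including the harmless point that an upper bound $A$ on $K$ comes from any single nonzero monomial, available since $\A_\Omega=\ZZ^n$ for compact $\Omega$), so there is nothing to add.
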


\begin{lemma}
\label{lemma_lomegaiszero}
If $\Omega$ is a compact set, then the function $l_\Omega$ is an $\Omega$-tropical series.
\end{lemma}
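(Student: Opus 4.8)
The plan is to derive this from Lemma~\ref{lemma_usualisomegatropical}: it suffices to check that $l_\Omega$ is continuous on all of $\Omega$, that $l_\Omega|_{\Omega^\circ}$ is a tropical series, and that $l_\Omega|_{\partial\Omega}=0$. The middle condition is exactly the preceding lemma, and $l_\Omega\geq 0$ on $\Omega$ is immediate since $c_q=\inf_{z\in\Omega}z\cdot q\leq z\cdot q$ for every $z\in\Omega$ and every $q\in\ZZ^n$. So the real content is the vanishing on $\partial\Omega$ together with continuity up to the boundary, and the former is the heart of the matter.

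For the boundary vanishing, fix $x\in\partial\Omega$. For $q\in\ZZ^n\setminus\{0\}$ we rewrite
\[
l^q_\Omega(x)=x\cdot q-\inf_{z\in\Omega}z\cdot q=\sup_{z\in\Omega}(x-z)\cdot q=h_{x-\Omega}(q),
\]
the value at $q$ of the support function of the compact convex body $x-\Omega$, which has $0$ on its boundary. This function is positively homogeneous of degree one, is Lipschitz with constant $L:=\sup_{z\in\Omega}|x-z|<\infty$ (here compactness of $\Omega$ is used), and vanishes exactly on the closed convex cone $N$ of those $q$ along which $x$ minimizes $z\cdot q$ over $\Omega$. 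Since $\Omega$ has a supporting hyperplane at $x$, the cone $N$ is nonzero, hence contains a unit vector $\nu$ and the whole ray $\RR_{\geq 0}\nu$. Combining vanishing on $N$ with the Lipschitz bound gives $l^q_\Omega(x)=h_{x-\Omega}(q)\leq L\cdot\dist(q,N)\leq L\cdot\dist(q,\RR_{\geq 0}\nu)$ for all $q$, so it is enough to produce nonzero lattice points arbitrarily close to the ray $\RR_{\geq 0}\nu$. That is Dirichlet's theorem on simultaneous Diophantine approximation: for every $Q$ there are $k\in\{1,\dots,Q\}$ and $p\in\ZZ^n$ with $|k\nu-p|$ of order $Q^{-1/n}$, and then $p\neq 0$ for $Q$ large and $\dist(p,\RR_{\geq 0}\nu)\leq|p-k\nu|\to 0$. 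As every $l^q_\Omega(x)$ is nonnegative, this forces $l_\Omega(x)=\inf_q l^q_\Omega(x)=0$. (If $x$ has a rational supporting hyperplane the argument collapses: some $q\in\ZZ^n\setminus\{0\}$ lies in $N$ and then $l^q_\Omega(x)=0$ outright.)

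Finally, continuity: being an infimum of affine functions, $l_\Omega$ is upper semicontinuous on $\Omega$, and it is continuous on $\Omega^\circ$ by the preceding lemma. For $x\in\partial\Omega$ we then have $0\leq\liminf_{z\to x}l_\Omega(z)\leq\limsup_{z\to x}l_\Omega(z)\leq l_\Omega(x)=0$ by nonnegativity and upper semicontinuity, so $l_\Omega$ is continuous at $x$ as well, and Lemma~\ref{lemma_usualisomegatropical} applies. The main obstacle is the identity $\inf_{q\in\ZZ^n\setminus\{0\}}l^q_\Omega(x)=0$ at boundary points with no rational supporting hyperplane: there one must marry the homogeneity and Lipschitz behaviour of the support function to a quantitative statement about how well the integer lattice approximates an arbitrary ray through the origin; the rest is routine.
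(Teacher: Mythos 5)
Your proof is correct and follows essentially the same route as the paper's: the crux in both is Dirichlet's simultaneous approximation theorem applied to the normal direction at a boundary point with no rational supporting hyperplane, combined with a bound of the form $l^q_\Omega(x)\le L\cdot\dist(q,\RR_{\ge 0}\nu)$ coming from the boundedness of $\Omega$. Your support-function/normal-cone formulation and the upper-semicontinuity argument for continuity up to the boundary are a cleaner write-up of what the paper does more informally.
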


\begin{proof}
\label{proof_lomegaiszero}
It is enough to prove that  $l_\Omega$ is zero on $\partial\Omega$ and continuous when we approach $\partial\Omega$.
It is clear that $l_\Omega=0$ on the points of $\{l^{q}_\Omega=0\}\cap\partial\Omega$ for all $q\in\ZZ^n$. Consider a point in $\partial\Omega$ where there is no support hyperplane with a rational slope. Without loss of generality we may suppose that this point is $0\in\RR^n$. Pick any support hyperplane $L$ at $0\in\partial\Omega$, let its irrational slope  be $\alpha\in\RR^n$.  Consider $r$ big enough (e.g. bigger than the diameter of $\Omega$) and take a ball $B\subset L$ of radius $r$ centered at $0$. Then, for directions $q$ close to $\alpha$, the values of support hyperplane equations $q\cdot z-c_q$ at $0$ can be estimated as $|c_q|$ which is less than $\max_{z\in B} q\cdot z$.  

To prove that $l_\Omega$ is an $\Omega$-tropical series it is enough to find a sequence of directions $q_i$ close to $\alpha=(\alpha_1,\dots,\alpha_n)$ such that $\max_{z\in B} q_i\cdot z$ tends to $0$ as $i\to \infty$. We use Dirichlet's simultaneous approximation theorem and construct a sequence of approximations $(q_{1i},\dots,q_{ni})\in\ZZ^n, r_i\in \ZZ$ such that $|\frac{q_{ki}}{r_i}-\alpha_k|\leq \frac{1}{r_i^{1+1/n}}$. Thus, for each vector $v\in B$ we have $v\cdot (q_{k1}-r_i\alpha_1,\dots, q_{kn}-r_i\alpha_n)\leq \frac{r}{r_i^{1/n}}$. Since $v\in B\subset L$ we have $v\cdot \alpha=0$, and by letting $r_i\to\infty$ we have the desired property.\end{proof}

The function $l_{\Omega}$ is important for all other constructions, this is the pointwise minimal on $\Omega$ non-negative tropical series without the constant term. Therefore for all applications it is important that $l_\Omega$ is an $\Omega$-tropical series. In particular $\Omega$ admits non-trivial $\Omega$-tropical series if and only if $l_\Omega$ is an $\Omega$-tropical series.  It is so if $\Omega$ is a convex compact set,  but we failed to find a reasonable criteriin (for dimension at least three) for $\Omega$ to imply that $l_\Omega$ is zero along $\partial\Omega$. In $\RR^2$, if $\Omega$ does not contain a line with an irrational slope, then $l_\Omega$ is an $\Omega$-tropical series, see \cite{us_series}.

\section{Shrinking operators $G_\p$}
\label{sec_gp}

Let $f$ be a non-trivial $\Omega$-tropical series. Then $C(f)$ is not empty and divides $\Omega^\circ$ into convex connected components. Each connected component of $\Omega^\circ\setminus C(f)$ is called a {\it face} of $f$ (or, equally, a face of $C(f)$). 
Let $P=\{\p_1,\dots,\p_m\}$ be  a
finite collection of distinct points in $\Omega^\circ$. Let $g$ be an $\Omega$-tropical series. 
\begin{definition}
\label{def_vOmegaH}
Denote by $V(\Omega,P,f)$ the set of $\Omega$-tropical series $g$
such that $g|_{\Omega}\geq f$ and  each of the points $\p\in P$ belong to the corner locus of $g$, i.e. $g$ is not smooth at any point $\p\in P$. 
\end{definition}
\begin{lemma}
\label{lemma_Visnonempty}
The set $V(\Omega,P,f)$  is not empty. 
\end{lemma}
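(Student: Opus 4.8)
The plan is to reduce the assertion to the following single-point statement: for every $\p\in\Omega^\circ$ there is an $\Omega$-tropical series that is not smooth at $\p$. Granting this, I would then \emph{add} such series to $f$. The two closure properties I would set up first are: (i) the pointwise sum of two $\Omega$-tropical series is an $\Omega$-tropical series — if $f_1=\inf_{q\in\A_1}(q\cdot z+c_q)$ and $f_2=\inf_{q\in\A_2}(q\cdot z+d_q)$, then $f_1+f_2=\inf_{q\in\A_1,\,q'\in\A_2}\big((q+q')\cdot z+c_q+d_{q'}\big)$ (the infima being finite by Lemma~\ref{lemma_welldefinedseries}), each resulting monomial is still nonnegative on $\Omega$, the sum vanishes on $\partial\Omega$, and local finiteness on $\Omega^\circ$ is Lemma~\ref{lemma_estimate}; and (ii) for a positive integer $N$ the function $N\cdot l_\Omega$ is an $\Omega$-tropical series, since $c_{Nq}=Nc_q$ gives $N\cdot l_\Omega(z)=\inf_{q\in N\ZZ^n\setminus\{0\}} l^{q}_\Omega(z)$, which is of the form \eqref{eq_omegatrop}, is nonnegative on $\Omega$, and vanishes on $\partial\Omega$ because $l_\Omega$ does (Lemma~\ref{lemma_lomegaiszero}, using that $\Omega$ is compact).

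For the single-point construction, fix $\p\in\Omega^\circ$, choose any two distinct $q,q'\in\ZZ^n\setminus\{0\}$, set $v:=\max\big(l^{q}_\Omega(\p),\,l^{q'}_\Omega(\p)\big)$ (which is $>0$ since $l^{q}_\Omega,l^{q'}_\Omega$ are positive on $\Omega^\circ$), and pick an integer $N$ with $N\cdot l_\Omega(\p)>v$ (possible as $l_\Omega(\p)>0$). Define
$$\psi_\p(z):=\min\Big(N\cdot l_\Omega(z),\ q\cdot z+\big(v-q\cdot\p\big),\ q'\cdot z+\big(v-q'\cdot\p\big)\Big).$$
Each affine term is nonnegative on $\Omega$: its minimum over $\Omega$ equals $c_{q}+v-q\cdot\p=v-l^{q}_\Omega(\p)\ge 0$ (and similarly for $q'$). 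Hence $\psi_\p$ is a minimum of an $\Omega$-tropical series and two monomials nonnegative on $\Omega$, so it is again an $\Omega$-tropical series: it is $\le N\cdot l_\Omega$ hence vanishes on $\partial\Omega$, it is $\ge 0$, it is an infimum of integer-slope affine functions nonnegative on $\Omega$, and it is locally a tropical polynomial on $\Omega^\circ$. At $\p$ both affine terms equal $v$ while $N\cdot l_\Omega(\p)>v$; by continuity there is a neighbourhood of $\p$ on which $N\cdot l_\Omega$ stays strictly above both affine terms, so there $\psi_\p=\min\big(q\cdot z+(v-q\cdot\p),\,q'\cdot z+(v-q'\cdot\p)\big)$. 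These two affine functions agree at $\p$ and have distinct gradients $q\ne q'$, so their minimum, and therefore $\psi_\p$, is not smooth at $\p$.

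Finally I would put $g:=f+\sum_{i=1}^{m}\psi_{\p_i}$. By closure property (i) this is an $\Omega$-tropical series, and $g\ge f$ on $\Omega$ because each $\psi_{\p_i}\ge 0$. To see $g$ is not smooth at a given $\p_j$, note that $f,\psi_{\p_1},\dots,\psi_{\p_m}$ are all concave on $\Omega^\circ$, so each has one-sided directional derivatives at $\p_j$, these are additive, and for a concave function $h$ one has $h'(\p_j;u)+h'(\p_j;-u)\le 0$, with equality for all $u$ exactly when $h$ is differentiable at $\p_j$. Let $q_j,q_j'$ be the vectors used in $\psi_{\p_j}$ and choose $u$ with $(q_j-q_j')\cdot u\ne 0$; then the $\psi_{\p_j}$-contribution to $g'(\p_j;u)+g'(\p_j;-u)$ equals $-|(q_j-q_j')\cdot u|<0$, while every other contribution is $\le 0$. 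Hence $g'(\p_j;u)+g'(\p_j;-u)<0$ and $g$ is not smooth at $\p_j$, so $g\in V(\Omega,P,f)$.

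The only genuinely non-routine point is that a corner of one summand is not cancelled when we add the others; this is where the concavity of $\Omega$-tropical series is essential (a ``valley'' of one summand could otherwise erase a ``peak'' of another). The construction also relies on Lemma~\ref{lemma_lomegaiszero} — valid precisely because $\Omega$ is compact — which is what makes $l_\Omega$, and hence $N\cdot l_\Omega$, a bona fide $\Omega$-tropical series vanishing on $\partial\Omega$; this is exactly what allows us to plant prescribed corners in $\Omega^\circ$ while keeping the boundary condition intact.
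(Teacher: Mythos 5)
Your proof is correct and follows essentially the same route as the paper: add to $f$ one boundary-vanishing $\Omega$-tropical summand per point of $P$, each engineered via $l_\Omega$ (and Lemma~\ref{lemma_lomegaiszero}) to carry a corner at that point, with concavity guaranteeing the corners survive summation. The only difference is that the paper uses the simpler corrector $\min(l_\Omega(z),l_\Omega(\p))$ in place of your $\psi_\p$ and leaves the closure-under-addition and non-cancellation details implicit, whereas you spell them out.
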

\begin{proof} Indeed, the function $$f'(z)=f(z)+\sum_{\p\in P}\min(l_\Omega(z),l_\Omega(\p))$$ belongs to $V(\Omega,P,f)$.  
\end{proof}
Clearly, if $f\geq g$ then $V(\Omega,P,f)\subset V(\Omega,P,g)$.

\begin{definition}
\label{def_gp}
For a finite subset $P$ of $\Omega^\circ$ and an $\Omega$-tropical series $f$, define an operator $G_P$, acting on $f$, by $$G_P f(z)=\inf \{g(z)|g\in V(\Omega,P,f)\}.$$ If $P$ contains only one point $\p$ we write $G_\p$ instead of $G_{\{\p\}}$. 
\end{definition}

We call $G_p$  the {\it shrinking} operators because they shrink the domain where $p$ belongs to, as we will see later. In \cite{us_series} these operators in dimension two were called wave operators, because secretly they correspond to a wave dynamic in a certain sandpile model, see \cite{announce, us}. Here we decided to rebaptize them.
  
\begin{lemma}\label{lem_gpmponotone}
Let $g$ and $f$ be two tropical series on $\Omega^\circ$ such that $g\leq f$ and $P\subset\Omega^\circ.$ Then $G_P g\leq G_P f$.
\end{lemma}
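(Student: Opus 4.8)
The plan is to reduce the statement to the inclusion of the corresponding families $V(\Omega,P,\cdot)$, which was already noted right after Definition~\ref{def_vOmegaH}. First I would recall that $V(\Omega,P,f)$ consists of those $\Omega$-tropical series $h$ which simultaneously satisfy $h\geq f$ on $\Omega$ and fail to be smooth at every point $\p\in P$; the second condition does not involve $f$ at all. Hence, assuming $g\leq f$, every $h\in V(\Omega,P,f)$ satisfies $h\geq f\geq g$ on $\Omega$ and still passes non-smoothly through each $\p\in P$, so $h\in V(\Omega,P,g)$. This gives the inclusion $V(\Omega,P,f)\subseteq V(\Omega,P,g)$.

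Then I would invoke the elementary fact that a pointwise infimum taken over a larger family of functions is no larger than the one taken over a subfamily: for every $z\in\Omega^\circ$,
\[
G_P g(z)=\inf_{h\in V(\Omega,P,g)}h(z)\leq \inf_{h\in V(\Omega,P,f)}h(z)=G_P f(z),
\]
which is exactly $G_P g\leq G_P f$. Both infima are taken over non-empty sets by Lemma~\ref{lemma_Visnonempty}, and since every $\Omega$-tropical series is non-negative on $\Omega$, both infima are finite, so the inequality is meaningful at every point of $\Omega^\circ$ (and, passing to the limit, on all of $\Omega$).

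There is essentially no serious obstacle here: the content of the lemma is purely formal, being the contravariance of ``$\inf$'' under enlarging the index set combined with the monotonicity of the constraint $h\geq f$ in the parameter $f$. The only point deserving a word of care is that $G_P g$ and $G_P f$ should be genuine finite-valued functions, which again follows from the non-emptiness of the $V$-sets and the non-negativity of $\Omega$-tropical series; nothing else about tropical series is used.
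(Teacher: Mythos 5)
Your proof is correct, and it takes a slightly but genuinely different route from the paper's. The paper argues that $G_Pf$ itself is an admissible competitor in the definition of $G_Pg$: it notes $G_Pf\geq f\geq g$ and that $G_Pf$ is not smooth at each point of $P$, hence $G_Pf\in V(\Omega,P,g)$ and the infimum defining $G_Pg$ is at most $G_Pf$. That argument quietly relies on the fact that the infimum $G_Pf$ is again an $\Omega$-tropical series which is non-smooth at every $\p\in P$ --- a closure property that the paper only establishes later (Lemma~\ref{lemma_tropicalseries}). Your argument instead uses the set inclusion $V(\Omega,P,f)\subseteq V(\Omega,P,g)$ (which the paper itself records immediately after Lemma~\ref{lemma_Visnonempty}) together with the elementary fact that an infimum over a larger index set is no larger. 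This is purely formal: it needs nothing about what $G_Pf$ is as a function, only that both infima are over non-empty families of non-negative functions, so it avoids any hidden dependence on the later lemma. Both proofs are valid; yours buys logical self-containment at this point in the development, while the paper's is marginally shorter once the closure property is available.
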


\begin{proof}
Indeed, $G_P f\geq f\geq g$ and $G_P f$ is not smooth at $P$. Therefore, $G_P g\leq G_P f$ by the definition of $G_P g.$ 
\end{proof}

Note that an $\Omega$-tropical series $f:\Omega\to\RR$ may have different presentations as the minimum of linear functions. For example, if $\Omega$ is the square $[0,1]\times[0,1]\subset\RR^2$, then $\min(x,1-x,y,1-y,1/3)$ equals at every point of $\Omega$ to $\min(x,1-x,y,1-y,1/3, 2x,5-2x)$. 
\begin{definition}
[cf. \cite{kiselman2014questions}, Lemma 5.3] \label{def_canonicalseries}
To resolve this ambiguity, we suppose that, in $\Omega^\circ$, a tropical series $f$ is always (if the opposite is not stated explicitly) given by 
\begin{equation}
\label{eq_series}
f(z)=\min\limits_{q\in\ZZ^n}(q\cdot z+c_q)
\end{equation}
 and with coefficients $c_q$ as small as possible. We call this presentation {\it the canonical form} of a tropical series. For each $\Omega$-tropical series there exists a unique canonical form. 
\end{definition}

\begin{example}
\label{ex_bigform}
The canonical form of $\min(x,1-x,y,1-y,1/3)$ on $\Omega=[0,1] \times [0,1]$ is $f(x,y)$ as in \eqref{eq_series} with $\A=\ZZ^2$, $a_{00}=1/3$ and $a_{ij}  = -\min_{(x,y)\in\Omega}(ix+jy)$ for $(i,j)\in\ZZ^2\setminus\{(0,0)\}$. 
\end{example}
\begin{proof}
It is easy to check that $f(x,y)=\min(x,1-x,y,1-y,1/3)$ on $\Omega$. All the coefficients $a_{ij},(i,j)\ne (0,0)$ are chosen as minimal with the condition that $ix+jy+a_{ij}$ is non-negative on $\Omega$. Finally, in the canonical form of $\min(x,1-x,y,1-y,1/3)$ the coefficient $a_{00}$ can not be less than $1/3$.
\end{proof}

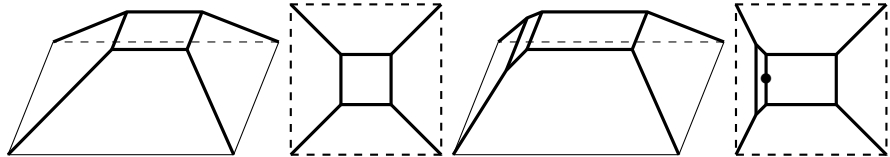
\begin{figure}[htbp]
\begin{tikzpicture}
[x={(0.5cm,0cm)}, y= {(0.1cm,0.25cm)}, z={(0.09cm,0.45cm)}, scale=6]
\draw (0,1,0)--(0,0,0)--(1,0,0)--(1,1,0);
\draw[dashed](1,1,0)--(0,1,0);
\draw[very thick] (0,0,0)--(1/3,1/3,1/3)--(2/3,1/3,1/3)--(1,0,0);
\draw[very thick] (0,1,0)--(1/3,2/3,1/3)--(2/3,2/3,1/3)--(1,1,0);
\draw[very thick] (1/3,1/3,1/3)--(1/3,2/3,1/3);
\draw[very thick] (2/3,1/3,1/3)--(2/3,2/3,1/3);
\end{tikzpicture}
\begin{tikzpicture}[scale=2]
\draw[very thick] (0,0)--(1/3,1/3)--(2/3,1/3)--(1,0);
\draw[very thick] (0,1)--(1/3,2/3)--(2/3,2/3)--(1,1);
\draw[very thick] (1/3,1/3)--(1/3,2/3);
\draw[very thick] (2/3,1/3)--(2/3,2/3);
\draw[thick, dashed](0,0)--(1,0)--(1,1)--(0,1)--cycle;
\end{tikzpicture}
\begin{tikzpicture}
[x={(0.5cm,0cm)}, y= {(0.1cm,0.25cm)}, z={(0.09cm,0.45cm)}, scale=6]
\draw (0,1,0)--(0,0,0)--(1,0,0)--(1,1,0);
\draw[dashed](1,1,0)--(0,1,0);
\draw[very thick] (0,0,0)--(2/15,4/15,4/15)--(1/5,1/3,1/3)--(2/3,1/3,1/3)--(1,0,0);
\draw[very thick] (0,1,0)--(2/15,11/15,4/15)--(1/5,2/3,1/3)--(2/3,2/3,1/3)--(1,1,0);
\draw[very thick] (1/5,1/3,1/3)--(1/5,2/3,1/3);
\draw[very thick] (2/3,1/3,1/3)--(2/3,2/3,1/3);
\draw[very thick] (2/15,4/15,4/15)--(2/15,11/15,4/15);
\end{tikzpicture}
\begin{tikzpicture}[scale=2]
\draw[very thick] (1,0)--(2/3,1/3)--(1/5,1/3)--(2/15,4/15)--(0,0);
\draw[very thick] (1,1)--(2/3,2/3)--(1/5,2/3)--(2/15,11/15)--(0,1);
\draw[very thick] (2/3,1/3)--(2/3,2/3);
\draw[very thick] (1/5,1/3)--(1/5,2/3);
\draw[very thick] (2/15,4/15)--(2/15,11/15);
\draw[thick, dashed] (0,0)--(1,0)--(1,1)--(0,1)--cycle;
\draw (1/5,1/2) node {$\bullet$};
\end{tikzpicture}
\caption{On the left: $\Omega$-tropical series $\min(x,y,1-x,1-y,1/3)$ and the corresponding tropical curve appear on the right of it. On the right: the result of applying $G_{(\frac{1}{5},\frac{1}{2})}$ to the left picture. The new $\Omega$-tropical series is $\min(2x,x + \frac{2}{15},y,1-x,1-y,\frac{1}{3})$ and the corresponding tropical curve is presented on the right. The fat point is $(\frac{1}{5},\frac{1}{2})$. Note that there appears a new face where $2x$ is the dominating monomial, \cite{us_series}.}
\label{fig_3dpicture}
\end{figure}

In Lemma~\ref{lem_singlegp} we prove that each individual $G_\p$ simply contracts one connected component of  $\RR^n\setminus C(f)$ until $C(G_\p f)$ passes through $\p$, see Figure~\ref{fig_ShrinkPhi}. In Proposition~\ref{prop_gpsconvergence} we will prove that $G_P$ can be obtained as the limit of repetitive applications $G_\p$ for $\p\in P$. 

We denote by $0_\Omega$ the function $f\equiv 0$ on $\Omega$.

\begin{lemma}
For $\p\in\Omega^\circ$ we have $G_{\p} 0_\Omega (z) = \min(l_\Omega(z),l_\Omega(\p))$.
\end{lemma}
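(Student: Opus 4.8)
The plan is to show that the explicit function $h(z) = \min(l_\Omega(z), l_\Omega(\p))$ both belongs to $V(\Omega, \{\p\}, 0_\Omega)$ and is pointwise below every member of $V(\Omega, \{\p\}, 0_\Omega)$; by Definition~\ref{def_gp} these two facts identify $h$ with $G_\p 0_\Omega$. The first fact is nearly immediate: $h \geq 0 = 0_\Omega$ since $l_\Omega \geq 0$ on $\Omega$; $h$ is an $\Omega$-tropical series because $l_\Omega$ is one (Lemma~\ref{lemma_lomegaiszero}, using that $\Omega$ is compact convex with nonempty interior as standing hypothesis), truncation by the constant $l_\Omega(\p)$ preserves the tropical-series property and still vanishes on $\partial\Omega$, and then Lemma~\ref{lemma_usualisomegatropical} applies; finally $h$ fails to be smooth at $\p$ because in every neighborhood of $\p$ the two branches $l_\Omega(z)$ and the constant $l_\Omega(\p)$ both occur (the former strictly dominates just outside, since $l_\Omega$ is not locally constant near any interior point — it is locally a nonconstant tropical polynomial — while at $\p$ they agree), so $\p$ lies in the corner locus.

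The second fact is the substance. Let $g \in V(\Omega, \{\p\}, 0_\Omega)$ be arbitrary; I must show $g(z) \leq \min(l_\Omega(z), l_\Omega(\p))$ for all $z \in \Omega$. Fix $z_0 \in \Omega^\circ$. Since $g$ is not smooth at $\p$, near $\p$ the series $g$ has at least two distinct monomials $q_1\cdot z + a_{q_1}$ and $q_2 \cdot z + a_{q_2}$ achieving the minimum $g(\p)$; because they are distinct linear functions agreeing at $\p$, at least one of them, say $q\cdot z + a_q$ with $q \neq 0$, satisfies $q\cdot\p + a_q = g(\p)$. Now $g \geq 0_\Omega = 0$ on $\Omega$, so this monomial is a nonnegative linear form on $\Omega$ with $q \neq 0$, and Remark~\ref{rem_lomegaestimate} gives $q\cdot z + a_q \geq l_\Omega(z)$ on $\Omega$; in particular $g(\p) = q\cdot\p + a_q \geq l_\Omega(\p)$. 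That already forces $g(\p) \geq l_\Omega(\p)$, but I want the reverse: actually what I need is $g(z) \leq l_\Omega(\p)$ everywhere and $g(z) \leq l_\Omega(z)$ everywhere. For the second, $h_1(z) := l_\Omega(z)$ is itself an $\Omega$-tropical series lying above $0_\Omega$; it is not smooth at $\p$? Not necessarily. So instead I argue by minimality from the other side: I should instead directly verify that $h = \min(l_\Omega(z), l_\Omega(\p))$ is a \emph{lower} bound by exhibiting, for each target point, a competing reason.

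Here is the cleaner route for the lower-bound half. Take any $g \in V(\Omega,\{\p\},0_\Omega)$ and any $z_0\in\Omega$. Consider the function $\tilde g(z) = \min\big(g(z),\, l_\Omega(z) + (g(\p) - l_\Omega(\p))^+ \big)$ — no, this is getting complicated; the honest plan is: show $g(\p) \le l_\Omega(\p)$. Pick the monomial $q\cdot z+a_q$ of $g$ active at some interior point $z_1$ near $\partial\Omega$ close to the point of $\partial\Omega$ realizing $l_\Omega(\p)$'s infimum over the relevant face; track $g$ along the segment from $\p$ to that boundary point. Since $g|_{\partial\Omega}=0$ and $g$ is concave-like (infimum of linear functions), for the boundary point $w$ with $l^{q_0}_\Omega(\p)=l_\Omega(\p)$ realized by direction $q_0$, the value $g(\p)$ is at most the value at $\p$ of the affine function through $(w,0)$ with the right slope, which is exactly $l^{q_0}_\Omega(\p)=l_\Omega(\p)$. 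Combined with $g\ge 0$ this shows $g(\p)\le l_\Omega(\p)$, hence the single monomial $l^{q_0}_\Omega$ (nonnegative on $\Omega$, vanishing at $w\in\partial\Omega$) has $l^{q_0}_\Omega(\p)\le$ well — and then $g' := \min(g, l^{q_0}_\Omega)$ still lies in $V$ and $G_\p 0_\Omega \le g' \le l_\Omega$ pointwise, while also $g'(\p) = g(\p) \le l_\Omega(\p)$, and truncating a tropical series above $\p$... Actually the slick finish: since any $g\in V$ satisfies $g(\p)\le l_\Omega(\p)$, and $\min(l_\Omega(z), g(\p))$ still lies in $V$ and is $\le \min(l_\Omega(z),l_\Omega(\p))$, taking the infimum over $g\in V$ yields $G_\p 0_\Omega\le \min(l_\Omega, l_\Omega(\p))$. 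Together with the first fact, equality follows.

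The main obstacle I anticipate is the clean justification of $g(\p)\le l_\Omega(\p)$ for \emph{every} $g\in V(\Omega,\{\p\},0_\Omega)$: one must use the non-smoothness of $g$ at $\p$ to extract a nonconstant monomial of $g$ through $\p$ that is nonnegative on $\Omega$ (which Remark~\ref{rem_lomegaestimate} then bounds below by $l_\Omega$, pinning $g(\p)\le$ that monomial's value $\le l_\Omega(\p)$? — no, the inequality goes the wrong way), so instead one argues via the support-hyperplane/segment-to-boundary estimate using $g|_{\partial\Omega}=0$ and concavity of $g$ along lines, exactly in the spirit of the last paragraph of the proof of Lemma~\ref{lemma_usualisomegatropical}. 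Verifying that truncations and minima of $\Omega$-tropical series remain $\Omega$-tropical series (needed to stay inside $V$) is routine given Lemma~\ref{lemma_usualisomegatropical}, and the non-smoothness of $h$ at $\p$ is routine given that $l_\Omega$ is locally a nonconstant tropical polynomial near every interior point.
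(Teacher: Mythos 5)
Your overall plan is correct, and the first half --- checking that $h(z)=\min(l_\Omega(z),l_\Omega(\p))$ belongs to $V(\Omega,\{\p\},0_\Omega)$, whence $G_\p 0_\Omega\le h$ --- is essentially fine. The second half, which you rightly call the substance, does not go through. You state at the outset that you must show $h$ is pointwise \emph{below} every $g\in V(\Omega,\{\p\},0_\Omega)$, i.e.\ $g\ge\min(l_\Omega(z),l_\Omega(\p))$; but two lines later you switch to proving $g\le\min(l_\Omega(z),l_\Omega(\p))$, and from there you pursue the claim $g(\p)\le l_\Omega(\p)$. That claim is false in general: $g=2h$ is an $\Omega$-tropical series, nonnegative, non-smooth at $\p$, with $g(\p)=2\,l_\Omega(\p)$. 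The concavity-along-a-segment justification you offer is also invalid: for a concave $g$ with $g(w)=0$ at a boundary point $w$, concavity yields a \emph{lower} bound for $g$ at interior points of the segment, not an upper bound. Finally, the ``slick finish'' fails twice over: if $g(\p)<l_\Omega(\p)$ then $\min(l_\Omega(z),g(\p))$ is constant near $\p$, hence smooth there and not in $V$; and even if it were in $V$, that paragraph only re-derives the upper bound $G_\p 0_\Omega\le h$, which you already had from the first half.

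The irony is that you proved the needed inequality and then discarded it. From non-smoothness of $g$ at $\p$ you extracted a non-constant monomial $q\cdot z+a_q$ of $g$, active at $\p$, nonnegative on $\Omega$, hence $\ge l_\Omega$ by Remark~\ref{rem_lomegaestimate}; this gives $g(\p)\ge l_\Omega(\p)$, which is exactly the right direction. To finish (this is the paper's argument, phrased there via the canonical-form coefficients of $G_\p 0_\Omega$): every non-constant monomial of $g$ is $\ge g\ge 0$ on $\Omega$ and hence $\ge l_\Omega$ by the same Remark; and the constant monomial $a_0$, if present, must satisfy $a_0\ge l_\Omega(\p)$, since otherwise $g$ would equal the constant $a_0$ on the neighborhood of $\p$ where $l_\Omega>a_0$ and would be smooth at $\p$. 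Therefore $g\ge\min(l_\Omega(z),l_\Omega(\p))$ pointwise for every $g\in V(\Omega,\{\p\},0_\Omega)$, i.e.\ $G_\p 0_\Omega\ge h$, and equality follows.
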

\begin{proof}
Indeed, all the coefficients, except $a_{0}$, in the canonical form of $G_{\p} 0_\Omega$ can not be less than in $l_{\Omega}$ by Remark~\ref{rem_lomegaestimate}, and if $a_{0}$ were less than $l_\Omega(\p)$, then the resulting function would be smooth at $\p$., contraditing the definition of $G_{\p}$.
\end{proof}

\begin{proposition} 
\label{prop_upperbound}
For any $z\in\Omega$ and $P=\{\p_1,\dots,\p_n\}$ the following inequality holds
$$G_P0_\Omega \leq n\cdot l_\Omega(z).$$
\end{proposition}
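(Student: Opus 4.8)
The plan is to avoid analyzing $G_P$ directly and instead to exploit the fact that $G_P 0_\Omega$ is, by Definition~\ref{def_gp}, the pointwise infimum over the whole set $V(\Omega,P,0_\Omega)$; so any explicitly available member of that set already yields an upper bound. The natural candidate is precisely the function produced in the proof of Lemma~\ref{lemma_Visnonempty}.

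Concretely, I would first recall that, applying the proof of Lemma~\ref{lemma_Visnonempty} with $f=0_\Omega$, the function
\[
f'(z)=\sum_{\p\in P}\min\big(l_\Omega(z),\,l_\Omega(\p)\big)
\]
belongs to $V(\Omega,P,0_\Omega)$ (it is a nonnegative $\Omega$-tropical series vanishing on $\partial\Omega$ since $l_\Omega$ does by Lemma~\ref{lemma_lomegaiszero}, it dominates $0_\Omega$, and each summand contributes a corner at its corresponding point $\p$). Consequently, by Definition~\ref{def_gp}, $G_P 0_\Omega(z)\le f'(z)$ for all $z\in\Omega$.

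Next I would estimate $f'$ termwise: for every $\p\in P$ one trivially has $\min(l_\Omega(z),l_\Omega(\p))\le l_\Omega(z)$, whence $f'(z)\le \sum_{\p\in P} l_\Omega(z)=|P|\cdot l_\Omega(z)$. Chaining this with the previous inequality gives $G_P 0_\Omega(z)\le |P|\cdot l_\Omega(z)$, which is the asserted bound (the paper writes $n$ for $|P|$, so one should keep in mind this $n$ is the number of marked points, a priori unrelated to the ambient dimension).

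There is essentially no serious obstacle: the statement is a two-line corollary of the construction behind Lemma~\ref{lemma_Visnonempty} together with the defining infimum property of $G_P$. The only thing to be slightly careful about is the notational clash just mentioned, and the harmless observation that the argument in fact proves the inequality on all of $\Omega$, including $\partial\Omega$, where both sides vanish.
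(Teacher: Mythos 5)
Your proof is correct and is essentially the paper's own argument: the paper likewise bounds $G_P0_\Omega$ by the member $\sum_{\p\in P}\min(l_\Omega(z),l_\Omega(\p))$ of $V(\Omega,P,0_\Omega)$ (recognized there as $\sum_{\p\in P}G_\p 0_\Omega$) and then estimates each summand by $l_\Omega(z)$. Your remark that $n$ here is $|P|$, not the ambient dimension, is a fair clarification of the paper's notation.
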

\begin{proof}
For each point $\p\in P$ we consider the function $(G_\p 0_\Omega)(z)=\min(l_\Omega(z),l_\Omega(\p))$, which is not smooth at $\p$ and $(G_\p 0_\Omega)|_{\partial\Omega}=0$. Finally, $$G_P0_\Omega \leq \sum_{\p\in P}G_\p 0_\Omega\leq n\cdot l_\Omega.$$
\end{proof}

\begin{lemma}
\label{lemma_tropicalseries}
The operator $G_P$ maps $\Omega$-tropical series to $\Omega$-tropical series. 
\end{lemma}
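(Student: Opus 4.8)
The plan is to reduce the claim to Lemma~\ref{lemma_usualisomegatropical}. Writing $h:=G_Pf$, it suffices to show that $h$ is continuous on $\Omega$, that $h|_{\Omega^\circ}$ is a tropical series in the sense of Definition~\ref{def_tropseries}, and that $h|_{\partial\Omega}=0$. First I would pin down two-sided bounds. Every $g\in V(\Omega,P,f)$ satisfies $g\ge f\ge 0$, so $h\ge f\ge 0$; and by Lemma~\ref{lemma_Visnonempty} the explicit function $f'(z)=f(z)+\sum_{\p\in P}\min\bigl(l_\Omega(z),l_\Omega(\p)\bigr)$ lies in $V(\Omega,P,f)$, so $h\le f'$. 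Since $\Omega$ is compact, $l_\Omega$ is an $\Omega$-tropical series (Lemma~\ref{lemma_lomegaiszero}), hence $f'$ is continuous on $\Omega$ with $f'|_{\partial\Omega}=0$; squeezing $0\le h\le f'$ at boundary points then yields $h|_{\partial\Omega}=0$ and continuity of $h$ along $\partial\Omega$.

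Next I would express $h$ itself as an infimum of affine functions. Each $g\in V(\Omega,P,f)$ is, by definition, an infimum of affine functions $q\cdot z+c^g_q$ with $q\in\ZZ^n$; from $g\ge 0$ on $\Omega$ and $g(z)\le q\cdot z+c^g_q$ it follows that each such affine function is non-negative on $\Omega$. Gathering all of them over all $g\in V(\Omega,P,f)$ and, for each fixed slope $q$, replacing the (possibly infinite) family of constants by their infimum $c^*_q\in\RR\cup\{+\infty\}$ — which is bounded below because $\Omega$ is compact — one gets $h(z)=\inf_{q\in\ZZ^n}\bigl(q\cdot z+c^*_q\bigr)$, where every piece $q\cdot z+c^*_q$ is still non-negative on $\Omega$.

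For the local tropical-polynomial property, fix $z_0\in\Omega^\circ$ and a closed ball $K=\overline{B}(z_0,r)\subset\Omega^\circ$, and set $A=\max_K f'$, so that $h\le A$ on $K$. Lemma~\ref{lemma_estimate}, applied with the open set $\Omega^\circ$ and the compact set $K$, shows that only finitely many slopes $q$ admit a constant $c$ with $(q\cdot z+c)|_{\Omega^\circ}\ge 0$ and $q\cdot z'+c\le A$ for some $z'\in K$. Every piece $q\cdot z+c^*_q$ of $h$ with $q$ outside this finite set is therefore $>A\ge h$ on all of $K$ and may be discarded from the infimum there; hence on $B(z_0,r)$ the function $h$ equals the minimum of the finitely many surviving pieces, i.e.\ a tropical polynomial. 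Thus $h|_{\Omega^\circ}$ is a tropical series, in particular continuous on $\Omega^\circ$, and combined with the previous paragraph $h$ is continuous on all of $\Omega$; Lemma~\ref{lemma_usualisomegatropical} then gives that $h=G_Pf$ is an $\Omega$-tropical series.

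The one genuine difficulty is the last step: going from ``$h$ is the infimum of an a~priori uncountable family of affine functions'' to ``$h$ is locally a finite minimum''. This works only because two facts can be aligned — each competing affine piece is non-negative on $\Omega$, so that Lemma~\ref{lemma_estimate} bounds its slope, and a single upper bound $A$ for $h$ is valid on a whole neighbourhood of $z_0$, so that the discarded pieces stay above all values of $h$ on that neighbourhood. The rest is routine manipulation of the sandwich $f\le h\le f'$.
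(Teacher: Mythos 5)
Your proof is correct and follows essentially the same route as the paper's: bound $G_Pf$ above by an explicit member of $V(\Omega,P,f)$, use Lemma~\ref{lemma_estimate} to reduce the infimum to a finite minimum on compact subsets of $\Omega^\circ$, and conclude via Lemma~\ref{lemma_usualisomegatropical} after checking the boundary values. If anything, your version is slightly more careful than the paper's in using the single uniform bound $A=\max_K f'$ (rather than the maximum of an arbitrary $g$) to justify that one finite set of slopes works for all competitors simultaneously.
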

\begin{proof}
\label{proof_tropicalseries}

Let $f$ be an $\Omega$-tropical series,  $g\in V(\Omega,P,f)$, $z_0\in \Omega^\circ$ and $K\subset\Omega^\circ$ be a compact set such that $z_0\in K^\circ$. Denote by $\C>0$ the maximum of $g$ on $K$. Consider the set $\mathcal{M}$  of all $p\in\ZZ^n$ for which there exist $d\in\RR,z_0\in K$ such that $0\leq (p\cdot z_0+d)|_{\Omega^\circ}, p\cdot z_0+d\leq  \C.$ The set $\mathcal{M}$ is finite by Lemma~\ref{lemma_estimate}. Therefore, the restriction of any tropical series $g\in V(\Omega,P,f)$ to $K$ can be expressed as a tropical polynomial $\min_{p\in\mathcal{M}}(p\cdot z+a_p(g))$. In particular, if we denote by $a_p$ the infimum of $a_p(g)$ for all $g\in V(\Omega,P,f)$ then $$G_P f|_K=\min_{p\in\mathcal{M}}(p\cdot z+a_p),$$
so $G_Pf$ is a tropical series.
 
It follows from Proposition~\ref{prop_upperbound}, that $G_P f \leq f+ n\cdot l_\Omega$. Then, $l_\Omega|_{\partial\Omega}=0$ by Lemma~\ref{lemma_lomegaiszero}.  Therefore $G_P f|_{\partial\Omega}=0$ and, thus, Lemma~\ref{lemma_usualisomegatropical} concludes the proof that $G_Pf$ is an $\Omega$-tropical series.
\end{proof}

\begin{remark}
\label{rem_cut}
Let $f=G_P 0_\Omega$ and $\e$ is such that $f(\p_i)>\e$ for each $\p_i\in P$. Then $G_P 0_\Omega = G_P \min (f(z),\e)$. 
\end{remark}
Indeed, $\min (f(z),\e)\geq 0_\Omega$ therefore $G_P \min (f(z),\e) \geq G_P 0_\Omega$. Then, $G_P0_\Omega\geq \min (f(z),\e)$ and not smooth at each of $\p_i$, therefore $G_P 0_\Omega \leq G_P \min (f(z),\e)$.

\section{Flow version of operators $G_\p$}
\label{sec_flow}

We define the following operator $\Add_p^c $ on tropical series, which {\bf add}s a constant $c$ to the coefficient $a_p$ in tropical monomial $p\cdot z$, leaving other coefficients unchanged.

\begin{definition}
\label{def_add}
For an $\Omega$-tropical series $f$ in the canonical form (see \eqref{eq_series}, Definition~\ref{def_canonicalseries}) and $c\geq 0, q\in\ZZ^n$ we denote by $\Add_q^c f$ the $\Omega$-tropical series $$(\Add_q^c f) (z)=\min\left(a_{q}+c+q\cdot z,\min\limits_{\substack{{p\in\ZZ^n} \\ {p\ne q}}}(a_p+p\cdot z)\right).$$		  
\end{definition}

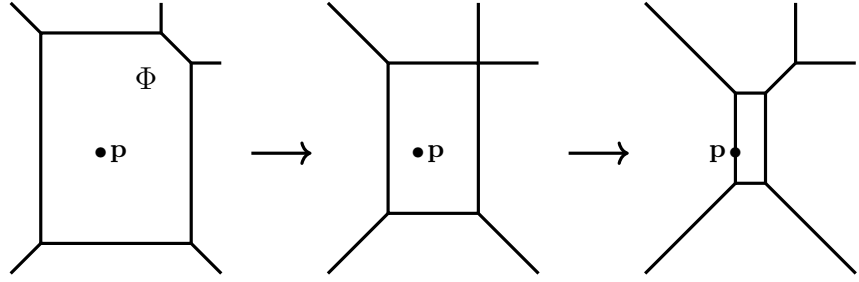
\begin{figure}[h]
    \centering

\begin{tikzpicture}[scale=0.4]
\draw[very thick](0,0)--++(1,1)--++(0,7)--++(4,0)--++(1,-1)--++(0,-6)--++(-5,0);
\draw[very thick](1,8)--++(-1,1);
\draw[very thick](5,8)--++(0,1);
\draw[very thick](6,7)--++(1,0);
\draw[very thick](6,1)--++(1,-1);
\draw(3,4)node{$\bullet$};
\draw(3,4)node[right]{$\p$};
\draw(4.5,6.5)node{\Large$\Phi$};
\draw[->][very thick](8,4)--(10,4);

\begin{scope}[xshift=300]
\draw[very thick](0,0)--++(2,2)--++(0,5)--++(3,0)--++(0,-5)--++(-3,0);
\draw[very thick](2,7)--++(-2,2);
\draw[very thick](5,7)--++(0,2);
\draw[very thick](5,7)--++(2,0);
\draw[very thick](5,2)--++(2,-2);
\draw(3,4)node{$\bullet$};
\draw(3,4)node[right]{$\p$};
\draw[->][very thick](8,4)--(10,4);
\end{scope}

\begin{scope}[xshift=600]
\draw[very thick](0,0)--++(3,3)--++(0,3)--++(1,0)--++(0,-3)--++(-1,0);
\draw[very thick](3,6)--++(-3,3);
\draw[very thick](5,7)--++(0,2);
\draw[very thick](5,7)--++(2,0);
\draw[very thick](5,7)--++(-1,-1);
\draw[very thick](4,3)--++(3,-3);
\draw(3,4)node{$\bullet$};
\draw(3,4)node[left]{$\p$};
\end{scope}

\end{tikzpicture}
\caption{Illustration for Remark~\ref{rem_smooth}. The operator $G_\p$ shrinks the connected component $\Phi$ (a face) of $\Omega\setminus C(f)$ that contains $\p$. Initially, $t=0$, then $t=0.5$, and finally $t=1$ in $\Add_q^{ct}f$. Note that combinatorics of the curve can change when $t$ goes from $0$ to $1$. Similar pictures can be found in \cite{vafa2012supersymmetric}, see Figure 2.} 
\label{fig_ShrinkPhi}
\end{figure}

\begin{lemma}
\label{lem_singlegp}
Let $f = \min_{q\in\ZZ^n}(q\cdot z+a_q)$ be an $\Omega$-tropical series in the canonical form, suppose that $\p\in\Omega^\circ\setminus C(f)$. Suppose that $f$ is equal to $q_0\cdot z+a_{q_0}$ near $\p$.  
Consider the function 
\begin{equation}
\label{eq_gp}
g(z) = \min_{q\in\ZZ^n, q\ne q_0}(q\cdot z+ a_q).
\end{equation} Then, $G_\p f = \Add_{q_0}^c f$ with $c=g(\p)-q_0\cdot \p$.
\end{lemma}

\begin{proof} $G_\p(f)$ is at most $\min\left(g,q_0\cdot z+(g(\p)-q_0\cdot \p)\right)$ by definition. Therefore $f$ and $G_\p f$ differ only in the coefficient of a single monomial. Also, direct calculation shows that $\min(g,q_0\cdot z+c)$ is smooth at $\p$ as long as $c< g(\p)-q_0\cdot \p$, which finishes the proof.
\end{proof}

\begin{definition}
A connected component of $\Omega\setminus C(f)$ is called a {\it face}. 
\end{definition}
Each face is a domain of linearity of $f$, thus to each face $\Phi$ there correspond a monomial $q\cdot z+a_q$ if $f$ and $f(z)|_{\Phi} = q\cdot z+a_q$.

\begin{corollary}
In the notation of Definition~\ref{def_generalweighteddistance}, for a point $\p\in\Omega^\circ$, for each $z\in\Omega$ we have $$(G_\p 0_\Omega)(z)=\min\{l_\Omega(z),l_\Omega(\p)\}.$$ 
\end{corollary}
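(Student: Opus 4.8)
The plan is to obtain this corollary as a direct application of Lemma~\ref{lem_singlegp} to the tropical series $f = 0_\Omega$. First I would record the canonical form of $0_\Omega$ (Definition~\ref{def_canonicalseries}). Since $\Omega$ is compact we have $\A_\Omega = \ZZ^n$, and the coefficient attached to a monomial $q \neq 0$ must be the smallest real number making $q\cdot z + c_q$ nonnegative on $\Omega$, i.e. $c_q = -\inf_{z\in\Omega}(q\cdot z)$, so that $q\cdot z + c_q = l^q_\Omega(z)$; the coefficient of $q = 0$ is $0$. Hence $0_\Omega(z) = \min\bigl(0,\, \min_{q\neq 0} l^q_\Omega(z)\bigr) = \min\bigl(0,\, l_\Omega(z)\bigr)$, which equals $0$ throughout $\Omega$ because $l_\Omega \geq 0$ there.

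Next I would verify the hypotheses of Lemma~\ref{lem_singlegp} at the point $\p$. Since $\p \in \Omega^\circ$, each $l^q_\Omega$ is positive at $\p$ and hence $l_\Omega(\p) > 0$; by continuity $l_\Omega > 0$ on a neighbourhood of $\p$, so on that neighbourhood $0_\Omega = \min(0, l_\Omega) = 0$. Thus $0_\Omega$ is smooth at $\p$, it equals the monomial $q_0 = 0$ with coefficient $a_{q_0} = 0$ near $\p$, and $\p \in \Omega^\circ \setminus C(0_\Omega)$, so Lemma~\ref{lem_singlegp} applies with $q_0 = 0$.

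It remains to unwind the conclusion. The auxiliary function of Lemma~\ref{lem_singlegp} is $g(z) = \min_{q\in\ZZ^n,\, q\neq 0}(q\cdot z + a_q) = \min_{q\neq 0} l^q_\Omega(z) = l_\Omega(z)$ by Definition~\ref{def_generalweighteddistance}. Therefore $G_\p 0_\Omega = \Add_{q_0}^{c} 0_\Omega$ with $c = g(\p) - q_0\cdot \p = l_\Omega(\p)$, and expanding Definition~\ref{def_add} gives $(\Add_{0}^{c} 0_\Omega)(z) = \min\bigl(a_{0} + c + 0\cdot z,\, \min_{p\neq 0}(a_p + p\cdot z)\bigr) = \min\bigl(l_\Omega(\p),\, l_\Omega(z)\bigr)$, which is the asserted formula. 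There is no real obstacle here: the only points needing care are the identification of the canonical form of $0_\Omega$ and the fact that the constant monomial is the active one near $\p$, both of which rest on $l_\Omega$ being positive on $\Omega^\circ$ (Definition~\ref{def_generalweighteddistance}, Lemma~\ref{lemma_lomegaiszero}).
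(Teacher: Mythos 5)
Your proof is correct and follows the route the paper intends for this corollary: it is stated immediately after Lemma~\ref{lem_singlegp} precisely so that one applies that lemma to $f=0_\Omega$ with $q_0=0$, identifies the canonical coefficients $a_q=-\inf_{\Omega}(q\cdot z)$ so that the auxiliary function $g$ equals $l_\Omega$, and reads off $c=l_\Omega(\p)$, exactly as you do. (The paper also establishes the same identity earlier as a standalone lemma by a slightly different argument --- no coefficient of $G_\p 0_\Omega$ other than $a_0$ can be smaller than the corresponding coefficient of $l_\Omega$ by Remark~\ref{rem_lomegaestimate}, and $a_0<l_\Omega(\p)$ would make the function smooth at $\p$ --- but your derivation via Lemma~\ref{lem_singlegp} is the one the corollary's placement calls for.)
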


\begin{remark}
\label{rem_smooth} Suppose that $G_\p f=\Add_q^c f$.
We can include the operator $\Add_q^c$  into a continuous family (flow) of operators $$f\to \Add_q^{ct}f,\text{\ where $t\in[0,1]$}.$$ This allows us to observe the tropical hypersurface {\it during} the application of $\Add_q^c$, in other words, we look at the family of hypersurfaces defined by tropical series $\Add_q^{ct}f$ for $t\in[0,1]$, this is a continuous deformation (flow) in the space of tropical series. See Figure~\ref{fig_ShrinkPhi}. \end{remark}
Note that this defines a continuous dynamic since the hypersurface changes by continuously shrinking a face, explaining the name of operators.

\section{Dynamic generated by $G_\p$ for $\p\in P$.}
\label{sec_dynamic}

Recall that $P=\{\p_i\}_{i=1}^n, P\subset\Omega^\circ$.  Let $\{\p_1',\p_2',\dots\}$ be an infinite sequence of points in $P$ where each point $\p_i,i=1,\dots, n$ appears an infinite number of times. Let $f$ be any $\Omega$-tropical series. Consider a sequence of $\Omega$-tropical series $\{ f_m\}_{m=1}^\infty$ defined recursively as $$f_1=f, f_{m+1}=G_{{\bf p}_m'} f_m.$$

\begin{proposition}\label{prop_gpsconvergence}
The sequence $\{ f_m \}_{m=1}^\infty$ uniformly converges to $G_P f$.
\end{proposition}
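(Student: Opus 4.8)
The plan is to sandwich $G_P f$ between the limit of the monotone sequence $\{f_m\}$ and itself, so that the sequence is forced to converge to $G_P f$. First I would observe that the sequence is non-decreasing: each $f_{m+1}=G_{\p_m'}f_m\geq f_m$ by the definition of $G_{\p}$ (it is an infimum over a set of series all lying above $f_m$, see Definition~\ref{def_gp} and Lemma~\ref{lemma_Visnonempty}). Next, I would show that the sequence is uniformly bounded above by $G_P f$. Indeed $G_P f\in V(\Omega,P,f)$, so $G_P f\geq f=f_1$, and since $G_P f$ is already non-smooth at every point of $P$ (in particular at $\p_1'$), applying $G_{\p_1'}$ to the inequality $G_P f\geq f_1$ and using monotonicity of $G_{\p}$ (Lemma~\ref{lem_gpmponotone}) together with the fact that $G_{\p_1'}(G_P f)=G_P f$ gives $G_P f\geq G_{\p_1'}f_1=f_2$; inducting, $G_P f\geq f_m$ for all $m$. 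Hence the pointwise limit $f_\infty(z)=\lim_m f_m(z)=\sup_m f_m(z)$ exists and satisfies $f\leq f_\infty\leq G_P f$ on $\Omega$.

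The core of the argument is then to prove the reverse inequality $f_\infty\geq G_P f$, for which it suffices (by definition of $G_P$ as the infimum over $V(\Omega,P,f)$) to check that $f_\infty\in V(\Omega,P,f)$, i.e. that $f_\infty$ is an $\Omega$-tropical series and that it is non-smooth at each $\p_i\in P$. For the first point I would work locally: fix a compact $K\subset\Omega^\circ$; by Lemma~\ref{lemma_estimate} (applied with the uniform bound $\C=\max_K G_P f$, which dominates all $f_m$ on $K$) there is a \emph{finite} set $\mathcal{M}\subset\ZZ^n$ of monomials that can contribute on $K$ to any of the $f_m$, so on $K$ each $f_m=\min_{q\in\mathcal{M}}(q\cdot z+a_q^{(m)})$ with the coefficient sequences $a_q^{(m)}$ non-decreasing in $m$ and bounded; they converge, and therefore $f_m\to f_\infty$ \emph{uniformly} on $K$ with $f_\infty|_K=\min_{q\in\mathcal{M}}(q\cdot z+\lim_m a_q^{(m)})$, a tropical polynomial. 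Thus $f_\infty$ is a tropical series on $\Omega^\circ$; combined with $0\leq f_\infty\leq G_P f$ and $G_P f|_{\partial\Omega}=0$ (Lemma~\ref{lemma_tropicalseries}) we get $f_\infty|_{\partial\Omega}=0$, so by Lemma~\ref{lemma_usualisomegatropical} $f_\infty$ is an $\Omega$-tropical series.

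It remains to verify that $f_\infty$ is non-smooth at each $\p_i$, and this is the step I expect to be the main obstacle, because non-smoothness is not preserved under pointwise (or even uniform) limits in general — a sequence of corner loci can escape a point in the limit. Here I would use the structure of $G_\p$ from Lemma~\ref{lem_singlegp}: if $f_\infty$ were smooth at some $\p_i$, then since $\p_i$ appears infinitely often in the sequence $\{\p_m'\}$, pick an index $m$ with $\p_m'=\p_i$ and $m$ large enough that $f_m$ is already close to $f_\infty$ on a fixed compact neighborhood of $\p_i$ on which $f_\infty$ is linear, say equal to $q_0\cdot z+a_{q_0}^\infty$. If $f_m$ is also linear near $\p_i$ with the same dominating monomial $q_0$, then by Lemma~\ref{lem_singlegp} $f_{m+1}=G_{\p_i}f_m$ jumps the coefficient of $q_0$ up to make $\p_i$ a corner, i.e. $f_{m+1}(\p_i)=g_m(\p_i)$ where $g_m$ is $f_m$ with the $q_0$-monomial deleted; one checks that $g_m(\p_i)$ is bounded below by roughly $f_\infty(\p_i)$ up to the (small, since $m$ large) error, while $f_{m+1}\leq f_\infty$ forces $f_{m+1}(\p_i)\leq f_\infty(\p_i)=q_0\cdot\p_i+a_{q_0}^\infty$ — so for $m$ large this squeezes the coefficient of $q_0$ in $f_{m+1}$ to essentially its $f_\infty$-value, meaning $\Add_{q_0}$ does (almost) nothing, which contradicts the strict jump $c=g_m(\p_i)-q_0\cdot\p_i>0$ guaranteed by Lemma~\ref{lem_singlegp} whenever $\p_i\notin C(f_m)$. (If instead $f_m$ is already non-smooth at $\p_i$ one threads the linearity hypothesis forward: the set of monomials active near $\p_i$ is finite and their coefficients are monotone, so after finitely many further steps the combinatorial type near $\p_i$ stabilizes, and one applies the previous paragraph to the stabilized picture.) I would make the ``essentially'' quantitative by a straightforward $\e$-bookkeeping using uniform convergence on the chosen compact neighborhood of $\p_i$. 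Once $f_\infty\in V(\Omega,P,f)$ is established, $f_\infty\geq G_P f$ follows from the definition of $G_P$, and with the earlier inequality we get $f_\infty=G_P f$; the convergence is uniform by Dini's theorem (monotone convergence of continuous functions to a continuous limit on the compact set $\Omega$), or directly from the uniform convergence of the finitely many coefficients on compacts together with the boundary bound.
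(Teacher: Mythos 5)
Your argument follows the same route as the paper's: the sequence is monotone and dominated by $G_Pf$ via Lemma~\ref{lem_gpmponotone} and $G_{\p}G_Pf=G_Pf$, only finitely many monomials contribute on compacts by Lemma~\ref{lemma_estimate} so the coefficientwise limit is a tropical series, and the limit is identified with $G_Pf$ by showing it lies in $V(\Omega,P,f)$. The one place you go beyond the paper is the non-smoothness of the limit at the points of $P$, which the paper dismisses as ``clear''; you correctly flag it as the delicate step and your contradiction argument is sound once the $\e$-bookkeeping is done (if $f_\infty$ were linear near $\p_i$, the finitely many competing monomials would have a strictly positive gap $\eta$ at $\p_i$, so for large $m$ with $\p_m'=\p_i$ the jump of Lemma~\ref{lem_singlegp} would force $f_{m+1}(\p_i)>f_\infty(\p_i)$, contradicting $f_{m+1}\leq f_\infty$; note this also shows your parenthetical case of $f_m$ already non-smooth at $\p_i$ cannot occur for large $m$).
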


\begin{proof}First of all, $G_P f$ has an upper bound $f+nl_{\Omega}$ by arguments as in Proposition~\ref{prop_upperbound}. 
Applying Lemma \ref{lem_gpmponotone}, induction on $m$ and the obvious fact that $G_{\p_m}G_Pf=G_Pf$ we have that $f_m\leq G_P f$ for all $m.$ 
It follows from Lemmata~\ref{lemma_estimate},~\ref{lem_singlegp} that $G_{{\bf p}_m'}, m=1,\dots$ affect only a fixed finite subset of monomials in $f_m$ (which can in principle contribute to a tropical series in a neighborhood of points in $P$). This implies the uniform convergence: since the family $\{f_{m}\}_{m=1}^\infty$ is pointwise non-decreasing and uniformly bounded, it converges to some $\Omega$-tropical series $\tilde f\leq G_P f$. Indeed, to find the canonical form of $\tilde f$ we can take the limits (as $m\to\infty$) of the coefficients for $f_m$ in their canonical forms \eqref{eq_series}. 

It is clear that $\tilde f$ is not smooth at all the points $P$. Therefore, by definition of $G_P$ we have $\tilde f\geq G_Pf$, which finishes the proof.
\end{proof}

\begin{remark}
Note that in the case when $\Omega$ is a lattice polytope and the points $P$ are lattice points, all coefficent increments $c$ in $G_\p=\Add_q^c$ are integers. Therefore, the sequence $\{f_m\}$ stabilizes after a {\bf finite} number of steps.
\end{remark}

\begin{lemma}
\label{lemma_ecloseseries}
Let $\e>0,\B\subset\ZZ^n$ be a finite set, and $f,g$ be two tropical series in $\Omega^\circ$ written as 
$$f(z)=\min_{q\in\B}(q\cdot z+a_q),g(z)=\min_{q\in\B}(q\cdot z+a_q+\delta_q).$$
If $|\delta_q|<\e$  for each $q\in\B$, then $C(f)$ is $2\e$-close to $C(g)$. If, moreover, all $\delta_q$ are of the same sign, then $C(f)$ is $\e$-close to $C(g)$.
\end{lemma}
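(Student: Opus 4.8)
The plan is to estimate, for each point $z\in C(f)$, how far one must move to reach a point of $C(g)$, using the fact that membership in the corner locus is detected by a tie between two distinct monomials. Recall (from the discussion after Definition~\ref{def_tropseries}) that $z\in C(f)$ precisely when there are two indices $q_1\ne q_2$ in $\B$ with $f(z)=q_1\cdot z+a_{q_1}=q_2\cdot z+a_{q_2}$, and this value is $\leq q\cdot z+a_q$ for all other $q\in\B$. So fix such a $z\in C(f)$ with an active tie between $q_1$ and $q_2$.

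First I would look at the function $h(w)=(q_1\cdot w+a_{q_1}+\delta_{q_1})-(q_2\cdot w+a_{q_2}+\delta_{q_2})$, which is affine with gradient $q_1-q_2\ne 0$. At $w=z$ we have $h(z)=\delta_{q_1}-\delta_{q_2}$, so $|h(z)|<2\e$ in general and $|h(z)|<\e$ when the $\delta_q$ share a sign (the difference of two numbers of the same sign each of absolute value $<\e$ has absolute value $<\e$). Since $|q_1-q_2|\geq 1$ (it is a nonzero integer vector), moving from $z$ a distance at most $|h(z)|/|q_1-q_2|\leq |h(z)|$ in the direction $\mp(q_1-q_2)/|q_1-q_2|$ reaches a point $z'$ where the two perturbed monomials of $g$ indexed by $q_1,q_2$ are exactly equal. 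The key point is then that at $z'$ this common value is still the minimum over all $q\in\B$ of the perturbed monomials, so that $z'\in C(g)$: indeed at $z$ the monomials $q_1,q_2$ beat every other $q$ by a definite (nonnegative) amount; along the short segment and after the $\e$- or $2\e$-perturbation of all coefficients, the only subtlety is whether some third monomial $q_3$ sneaks below. I would handle this by the standard observation that $C(g)$ is a closed set whose complement is a union of open cells on which a single perturbed monomial dominates, and that the segment from $z$ of length $|h(z)|$ must cross $C(g)$ because the dominant perturbed monomial at its two endpoints differs (at $z$, monomials $q_1$ and $q_2$ are within $<\e$ of being tied and both are within $O(\e)$ of the overall perturbed minimum; a continuity/intermediate-value argument on the piecewise-linear function $g$ along the segment, together with the fact that $f=g$ up to an $\e$ shift of each coefficient, forces a corner of $g$ within the claimed distance). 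Symmetrically every point of $C(g)$ is within $2\e$ (resp.\ $\e$) of $C(f)$, giving the two-sided Hausdorff bound claimed by "$2\e$-close".

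The main obstacle I anticipate is the case analysis around a \emph{third} monomial: when $q_1,q_2$ are tied at $z$ it can happen that $q_3\cdot z+a_{q_3}$ also equals the minimum (i.e.\ $z$ lies on a lower-dimensional stratum of $C(f)$ where several monomials meet), and then a naive "walk in the $q_1-q_2$ direction" may walk along $C(f)$ rather than transversally, or the nearest point of $C(g)$ may be governed by a different pair. To deal with this cleanly I would not try to track which pair is active, but instead argue globally: the sublevel structure of $g$ is the sublevel structure of $f$ with each linear piece shifted by $\delta_q\in(-\e,\e)$, so the graph of $g$ lies in the $\e$-neighborhood (vertically) of the graph of $f$; a point $z$ is in $C(f)$ iff the supremum of the lower-dominant affine pieces is attained by $\geq 2$ of them, and a short geometric lemma shows that perturbing each affine piece by at most $\e$ moves the corner locus by at most $2\e$ in the domain, improving to $\e$ when all shifts have the same sign because then the graph is shifted \emph{monotonically} and no "crossing" is created or destroyed except by the net relative shift of size $<\e$. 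I would phrase this last lemma in terms of the Legendre-type max of affine functions and quote convexity, so that no explicit coordinates are needed; that is where the real content sits, and everything else is bookkeeping.
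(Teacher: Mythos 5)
Your first construction --- walking from a corner point $z$ of $C(f)$ in the direction $\pm(q_1-q_2)$ until the two perturbed monomials indexed by $q_1,q_2$ become equal --- does not by itself produce a point of $C(g)$, and the patch you propose (that the segment ``must cross $C(g)$ because the dominant perturbed monomial at its two endpoints differs'') is false. A one-variable counterexample: $f(z)=\min(z,\,-z,\,0.05\e)$ has a corner at $z=0$ with active pair $q_1=1$, $q_2=-1$; take $\delta_1=0.9\e$, $\delta_{-1}=0.5\e$, $\delta_0=-0.9\e$, so that $g(z)=\min(z+0.9\e,\,-z+0.5\e,\,-0.85\e)$. Your perturbed tie point for the pair $(1,-1)$ is $z'=-0.2\e$, but on the whole segment $[-0.2\e,0]$ the constant monomial strictly dominates ($g\equiv-0.85\e$ there), so the segment never meets $C(g)$; the nearest points of $C(g)$ are $z=-1.75\e$ and $z=1.35\e$, within $2\e$ of the origin as the lemma asserts, but not where your walk looks. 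Your second paragraph correctly diagnoses this third-monomial obstruction, but then defers to ``a short geometric lemma'' asserting that an $\e$-perturbation of all coefficients moves the corner locus by at most $2\e$ --- which is verbatim the statement to be proved, so the argument is circular exactly where you say the real content sits.

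The missing idea is to argue by contradiction on a ball rather than constructively along a segment. Suppose $B_{2\e}(p)\cap C(g)=\varnothing$ for some $p\in C(f)$. Then a \emph{single} perturbed monomial $q_3\cdot z+a_{q_3}+\delta_{q_3}$ is the strict minimum of $g$ on all of $B_{2\e}(p)$, and since two distinct monomials $q_1,q_2$ of $f$ are minimal at $p$ we may relabel so that $q_3\ne q_1$. The affine function $h(z)=(q_3\cdot z+a_{q_3}+\delta_{q_3})-(q_1\cdot z+a_{q_1}+\delta_{q_1})$ is then strictly negative on the whole ball, while $h(p)\ge \delta_{q_3}-\delta_{q_1}>-2\e$ because $q_1$ achieves the minimum of $f$ at $p$. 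Its gradient $q_3-q_1$ is a nonzero integer vector, so $h$ attains a value at least $h(p)+2\e\cdot|q_3-q_1|\ge h(p)+2\e>0$ somewhere on $B_{2\e}(p)$ --- a contradiction. This single quantitative step (an affine function with nonzero integer gradient cannot stay within a band of width $<2\e$ of its central value over a ball of radius $2\e$) is what your proposal lacks; your observation that $|\delta_{q_3}-\delta_{q_1}|\le\e$ when the $\delta_q$ share a sign then yields the refined $\e$-bound, and the reverse inclusion follows by exchanging the roles of $f$ and $g$.
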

\begin{proof}

Let $p\in C(f)$, $q_1z+a_{q_1},q_2z+a_{q_2}$ be two monomials of $f$, which are minimal at $p$. Assume, for contradiction, that $B_{2\e}(p)\cap C(g) = \varnothing$. Therefore $g|_{B_{2\e}(p)} = q_3z+a_{q_3}+\delta_{q_3}$.  Then, $q_3\ne q_1$ without loss of generality. We rewrite all this information as $q_1p+a_{q_1} = q_2p+a_{q_2}\leq q_3p+a_{q_3}$ and $q_3z+a_{q_3}+\delta_{q_3}< q_1z+a_{q_1}+\delta_{q_1}$ on $B_{2\e}(p)$.

Therefore $q_3z+a_{q_3}+\delta_{q_3} - (q_1z+a_{q_1}+\delta_{q_1})\geq \delta_{q_3}-\delta_{q_1}$ at $z=p$ and strictly negative on $B_{2\e}(p)$. Note that $|\delta_{q_3}-\delta_{q_1}|<2\e$ (and $|\delta_{q_3}-\delta_{q_1}|\leq \e$ if $\delta_{q_3},\delta_{q_1}$ are of the same sign) but the maximum of $(q_3-q_1)z$ on $B_{r}$ is at least $r\cdot |q_1-q_3|\geq r$ which finishes the proof in both cases.

\end{proof}

\begin{corollary}
Using Lemma~\ref{lemma_ecloseseries} we may include the dynamic $f=f_1\to f_2\to f_3\to\dots\to G_Pf$ into a continuous dynamic with time $t\in [0,1]$. Indeed, let us rescale the time and perform $f_1\to f_2$ on $[0,1/2]$, then $f_2\to f_3$ on $[1/2,3/4]$, etc. By Proposition~\ref{prop_gpsconvergence} and Remark~\ref{rem_smooth} this extends continuously near $t=1$.
\end{corollary}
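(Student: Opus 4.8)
The plan is to interpolate each discrete step $f_m\to f_{m+1}$ by the flow of Remark~\ref{rem_smooth} and glue these flows on a sequence of intervals exhausting $[0,1)$, placing $G_Pf$ at $t=1$. By Lemma~\ref{lem_singlegp}, for each $m$ there are $q_m\in\ZZ^n$ and $c_m\geq 0$ with $f_{m+1}=\Add_{q_m}^{c_m}f_m$ (if $\p_m'$ already lies on $C(f_m)$ then $c_m=0$ and the step is trivial; note $c_m\geq 0$ since $g\geq f_m$ in the notation of Lemma~\ref{lem_singlegp}). Set $I_m=[1-2^{-(m-1)},\,1-2^{-m}]$ and let $\phi_m:I_m\to[0,1]$ be the increasing affine bijection. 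Define
$$F_t=\Add_{q_m}^{c_m\phi_m(t)}f_m\quad\text{for }t\in I_m,\qquad F_1=G_Pf.$$
Each $F_t$ is an $\Omega$-tropical series by Definition~\ref{def_add} (the increment $c_m\phi_m(t)$ is $\geq 0$), and $F_0=f_1=f$.

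The second step is continuity on $[0,1)$. Inside each $I_m$ the map $t\mapsto F_t$ is the reparametrized flow of Remark~\ref{rem_smooth}, hence continuous in the sup-norm on $\Omega$ (and, by Lemma~\ref{lemma_ecloseseries}, the corner loci $C(F_t)$ move continuously in the Hausdorff metric). At a common endpoint $t=1-2^{-m}$ the value computed from $I_m$ is $\Add_{q_m}^{c_m}f_m=f_{m+1}$, while the value computed from $I_{m+1}$ is $\Add_{q_{m+1}}^{0}f_{m+1}=f_{m+1}$; the two definitions agree, so $F$ is continuous on all of $[0,1)$.

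The main work is continuity at $t=1$. By Lemmata~\ref{lemma_estimate} and~\ref{lem_singlegp} (as already used in the proof of Proposition~\ref{prop_gpsconvergence}), there is a fixed finite set $\mathcal{M}\subset\ZZ^n$ such that every $f_m$ and every $F_t$ is obtained from $f$ by modifying only the canonical coefficients indexed by $\mathcal{M}$, and every $q_m$ lies in $\mathcal{M}$. For $q\in\mathcal{M}$ the canonical coefficient equals $a_q(h)=\sup_{z\in\Omega}\bigl(h(z)-q\cdot z\bigr)$, so it is monotone in $h$; since $f_m\leq f_{m+1}\leq G_Pf$, the sequence $a_q(f_m)$ is non-decreasing and bounded above by $a_q(G_Pf)$, and by Proposition~\ref{prop_gpsconvergence} (uniform convergence) it converges to $a_q(G_Pf)$. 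Telescoping the increments applied to a fixed coordinate $q$ shows $\sum$ of those increments is finite; as $\mathcal{M}$ is finite this forces $c_m\to 0$. Now given $\e>0$ pick $M$ with $\max_{q\in\mathcal{M}}|a_q(f_m)-a_q(G_Pf)|<\e$ and $c_m<\e$ for all $m\geq M$. For $t\in I_m$ with $m\geq M$, each coefficient of $F_t$ indexed by $\mathcal{M}$ differs from that of $G_Pf$ by at most $|a_{q}(f_m)-a_q(G_Pf)|+c_m<2\e$, so $\|F_t-G_Pf\|_{\infty}\leq\max_q|a_q(F_t)-a_q(G_Pf)|<2\e$, and Lemma~\ref{lemma_ecloseseries} bounds the Hausdorff distance between $C(F_t)$ and $C(G_Pf)$ by $4\e$. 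Letting $\e\to 0$ gives continuity of $t\mapsto F_t$ and of $t\mapsto C(F_t)$ at $t=1$, finishing the construction.

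The step I expect to be the main obstacle is this last one: passing from the uniform convergence of the \emph{discrete} sequence (Proposition~\ref{prop_gpsconvergence}) to uniform smallness of the \emph{intermediate} states $F_t$ as $t\to 1$ — concretely, deducing $c_m\to 0$ and controlling the perturbation occurring inside the final interval $I_m$. This is exactly where finiteness of $\mathcal{M}$ and monotonicity of the canonical coefficients are needed; the remainder is routine bookkeeping with the reparametrizations $\phi_m$ and the matching of the flows at the shared endpoints.
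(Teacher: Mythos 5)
Your construction is exactly the one the paper sketches: the same dyadic reparametrization of the single-step flows from Remark~\ref{rem_smooth}, glued at matching endpoints, with continuity at $t=1$ deduced from Proposition~\ref{prop_gpsconvergence} and Lemma~\ref{lemma_ecloseseries}. The paper leaves the endpoint argument at the level of a citation, whereas you correctly fill in the missing detail (finiteness of the modified monomial set $\mathcal{M}$, telescoping of the canonical coefficients, hence $c_m\to 0$ and smallness of the intermediate perturbations), so the proposal is correct and essentially identical in approach.
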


\begin{remark}
\label{rem_curvesnearlimit}
Let ${\bf p'_i}\in P$ for $i=1,\dots,m$ (we allow repetitions). Note that if $G_{{\bf p'}_m}\dots G_{{\bf p'}_1}f$ is close to the limit $G_Pf$, then by Lemma~\ref{lemma_ecloseseries} we see that the corresponding tropical hypersurfaces are also close to each other in Hausdorff distance.
\end{remark}

\begin{definition}
For two $\Omega$-tropical series $f=\inf(q\cdot z+a_q), q\in \B$ and $g = \inf(q\cdot z+b_{q}), q\in \B$ we define the coefficient distance $\rho(f,g) = \sup_\B(|a_q-b_q|)$.
\end{definition}

\begin{lemma}
\label{lem_distance}
If $f,g$ are two $\Omega$-tropical series and $\p\in\Omega^\circ$, then $\rho(G_\p f,G_\p g)\leq \rho(f,g)$.
\end{lemma}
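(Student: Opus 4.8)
The plan is to reduce the statement to the single-monomial update description of $G_\p$ given by Lemma~\ref{lem_singlegp}, together with the observation that $G_\p$, when $\p$ already lies on $C(\cdot)$, acts as the identity. First I would fix $\p\in\Omega^\circ$ and two $\Omega$-tropical series $f,g$, and set $r=\rho(f,g)$. Writing both in canonical form over the common index set $\B=\A_\Omega$, the definition of $\rho$ gives $|a_q(f)-a_q(g)|\le r$ for every $q$; equivalently, $g+r\ge f$ and $f+r\ge g$ pointwise (here $f+r$ denotes the series with every coefficient increased by $r$, which is again an $\Omega$-tropical series on $\Omega^\circ$, though one must note it need not vanish on $\partial\Omega$ — this is a harmless technical point since we only compare on $\Omega^\circ$, or one truncates by $\min(\cdot,r+\text{something})$ as in Remark~\ref{rem_cut}).

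The key step is to show $G_\p(f+r)=G_\p f + r$, i.e. that shifting all coefficients by a common constant commutes with $G_\p$. This is immediate from the variational definition: $V(\Omega,P,f+r)$ is obtained from $V(\Omega,P,f)$ by adding $r$ to every admissible series, because the smoothness-at-$\p$ condition and the inequality $h\ge f+r \Leftrightarrow h-r\ge f$ are both invariant under a global constant shift, and the infimum then shifts by $r$ as well. (Alternatively one invokes Lemma~\ref{lem_singlegp} directly: the increment $c=g_{q_0}(\p)-q_0\cdot\p$ computed for $f+r$ versus $f$ differs by exactly... — in fact the two auxiliary functions both shift by $r$, so $c$ is unchanged, and the resulting series shifts by $r$.) Granting this, combine it with monotonicity (Lemma~\ref{lem_gpmponotone}): from $g+r\ge f$ we get $G_\p g + r = G_\p(g+r)\ge G_\p f$, hence $a_q(G_\p g)-a_q(G_\p f)\ge -r$ for all $q$; by symmetry the reverse inequality holds, so $|a_q(G_\p f)-a_q(G_\p g)|\le r$ for all $q$, i.e. $\rho(G_\p f,G_\p g)\le r=\rho(f,g)$.

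The main obstacle I anticipate is purely bookkeeping around canonical forms: $G_\p f$ and $G_\p g$ must be compared coefficient-by-coefficient in \emph{their own} canonical forms, and one must make sure that $f+r$ (which is not in canonical form, and whose restriction to $\partial\Omega$ is not zero) is handled legitimately. The clean way is to avoid the shifted series altogether and argue directly: the quantity $\rho(G_\p f, G_\p g)$ only involves the one monomial $q_0$ whose coefficient $G_\p$ changes (for $f$; possibly a different $q_0'$ for $g$), and away from that monomial the coefficients are unchanged, so $|a_q(G_\p f)-a_q(G_\p g)|=|a_q(f)-a_q(g)|\le\rho(f,g)$ there; for the changed monomial(s) one checks via Lemma~\ref{lem_singlegp} that the new coefficient is a $1$-Lipschitz function (in $\rho$) of the old data, since it is obtained by an $\inf$ over linear-in-the-coefficients quantities. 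Either route works; the infimum-commutes-with-constant-shift argument is the shortest and is the one I would write up.
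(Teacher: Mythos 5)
Your preferred route has a genuine problem at its key step. The set $V(\Omega,P,h)$ consists by definition of \emph{$\Omega$-tropical series} dominating $h$, and an $\Omega$-tropical series must vanish on $\partial\Omega$. Adding the constant $r=\rho(f,g)>0$ to an element of $V(\Omega,P,f)$ produces a function equal to $r$ on $\partial\Omega$, hence not an $\Omega$-tropical series; conversely $V(\Omega,P,f+r)$ is empty, since no $\Omega$-tropical series can dominate a function that is $\geq r$ on the boundary. So the claimed identity ``$V(\Omega,P,f+r)$ is $V(\Omega,P,f)$ shifted by $r$'', and with it $G_\p(f+r)=G_\p f+r$, does not make sense under the paper's definitions, and the truncation of Remark~\ref{rem_cut} does not repair this: $\min(f+r,M)$ still equals $r$ on $\partial\Omega$. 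This matters because the monotonicity you invoke (Lemma~\ref{lem_gpmponotone}) is itself proved by exhibiting an element of the relevant $V$-set, so the chain $G_\p g+r=G_\p(g+r)\geq G_\p f$ rests on an undefined object. Your parenthetical repair via Lemma~\ref{lem_singlegp} (the increment $c$ is shift-invariant) is sound for the one-step formula, but you would then also need to justify passing from the pointwise inequality $G_\p g+r\geq G_\p f$ back to the coefficientwise inequality that defines $\rho$; this does hold, because the canonical coefficient of $q$ equals $\sup_{z\in\Omega}(h(z)-q\cdot z)$ and is therefore monotone under pointwise domination, but it is not said.

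Your ``direct'' fallback is essentially the paper's argument and is the one to write up, but as sketched it is incomplete exactly where the work lies. When $\p$ sits in the same face for $f$ and $g$, the single updated coefficient is $\min_{p\neq q_0}(p\cdot\p+a_p)-q_0\cdot\p$, visibly $1$-Lipschitz in the coefficient vector, and all other coefficients are untouched. But when $\p$ lies in \emph{different} faces of $C(f)$ and $C(g)$ --- say $G_\p f$ raises $a_q$ while $G_\p g$ raises $b_{q'}$ with $q\neq q'$ --- the two operators update different coordinates, so the ``same $1$-Lipschitz function of the old data'' framing does not apply. One must check separately that the new $a_q$ still satisfies $a_q\leq b_q+\rho(f,g)$, which uses that $\p$ lies in the $q'$-face of $g$ (so $q'\cdot\p+b_{q'}\leq q\cdot\p+b_q$) together with new $a_q\leq q'\cdot\p+a_{q'}-q\cdot\p$, and symmetrically for $b_{q'}$. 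This is precisely the case the paper handles with the explicit chain of inequalities $a_q\leq a_{q'}$, $b_{q'}\leq b_q$, $a_{q'}\leq b_{q'}+\rho(f,g)$; without it the proof is not complete.
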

\begin{proof}
For each $z\in \Omega$ we have $|f(z)-g(z)|\leq \rho(f,g)$. Therefore, if $\p$ belong to the face where $q\cdot z+a_q = f(z)$ and $q\cdot z+b_{q} = g(z)$, then it follows from Lemma~\ref{lem_singlegp} that the coefficients $a_q,b_q$ in monomial $q\cdot z$ in $G_\p f,G_\p g$ differ by at most $\rho(f,g)$. 

Suppose $\p$ lies in different faces in $C(f), C(g)$, i.e. $q\cdot z+a_{q} = f(z), q'\cdot z+b_{q'} = g(z)$ near $\p$. Without loss of generality we may suppose that $q'=0\in\ZZ^n$ and $\p=0\in\RR^n$. Therefore, $a_{q}\leq a_{0}, b_{q}\geq b_{0}, a_{0}\leq b_{0}+\rho(f,g)$. Finally, $G_\p f$ increases $a_{q}$, clearly the updated coefficient $a_{q}$ satisfies $a_{q}\leq a_{0}\leq b_{0}+\rho(f,g)\leq b_{q}+\rho(f,g)$. Other inequalities for the coefficients can be obtained similarly. 
\end{proof}

\section{Mild singularities}
\label{sec_mild}
Recall that for a tropical hypersurface $C(f)$ the connected components (we already called them faces) of $\RR^n\setminus C(f)$ correspond to monomials in $f$ (this monomial is the minimal one on that connected component). Then, in general, faces of maximal dimension (i.e. $n-1$) in $C(f)$ correspond to pairs of monomials of $f$, which attain the minimum along this face. Faces of $C(f)$ of dimension $n-2$ correspond to triples of monomials equal along such a face, etc. The general statement is as follows. Let us pick a tropical polynomial $$f(z)=\min_{q\in\B}(q\cdot z+a_q)$$
 
Consider the extended Newton polytope $\tilde N(f)$, i.e. the convex hull of the set
$$N(f)=\{(q,r)\in\ZZ^n\times\RR| q\in \B, r\geq a_q\}.$$                           	                             

The projection of faces of $\tilde N(f)$ along the last coordinates defines a subdivision of  the convex hull $\tilde \B$ of $\B$. This subdivision is dual to the combinatorial structure of $C(f)$. To each point $z\in C(f)$ there corresponds a set $\B_z$ of monomials of $f$, which are minimal at $z$. This convex hull $\tilde \B_z$ of $\B_z$ is a face of the aforementioned subdivision. And vice versa, to each face $\tilde\B_V$ of dimension $k$ of this subdivision of $\tilde B$ there corresponds a subset $V$ of points $z$ of $C(f)$ such that $\tilde\B_V=\tilde \B_z$ for each $z\in V$, and this subset $V$ is a face of $C(f)$.

\begin{definition}
We say that $f$ (or $C(f)$) has only mild singularities if for each $z\in C(f)$ the lattice polytope $\tilde \B_z$ contains no lattice points except its vertices.
\end{definition}

\begin{remark}
\label{rem_mild}
Another, equivalent definition is as follows: $f$ has only mild singularities if for each $q\in \tilde B$ there exists an open subset of $\Omega$ where the monomial of $f$, corresponding to $q$ is the minimal monomial.   
\end{remark}
This terminology comes from the case of planar tropical curves. In $\ZZ^2$, the lattice polygons with no lattice points except vertices are primitive vectors in $\ZZ^2$ (this corresponds to edges of the tropical curve of multiplicity one), triangles of area $1/2$ (this corresponds to smooth vertices of the tropical curve) and parallelograms of area one (this corresponds to nodal points of the tropical curve).

\begin{lemma}
Let $f$ be an $\Omega$-tropical hypersurface with only mild singularities. Consider any face $\Phi$ of it, of any dimension (e.g. a vertex of this hypersurface). There exists no $q\in \ZZ^n, a_q\in \RR$ such that $\min (f(z),qz+a_q)$ coincides with $f$ outside of a small neighborhood of $\Phi$.
\end{lemma}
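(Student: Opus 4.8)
The plan is to argue by contradiction, using the dual picture of the extended Newton polytope introduced just above the statement. Suppose such $q\in\ZZ^n$ and $a_q\in\RR$ exist, so that $g(z):=\min(f(z),q\cdot z+a_q)$ agrees with $f$ outside a small neighborhood $W$ of the face $\Phi$, but the monomial $q\cdot z+a_q$ is strictly below $f$ somewhere inside $W$. First I would observe that since $g=f$ outside $W$ while the new monomial is active inside $W$, the set where $q\cdot z+a_q=g(z)$ is a bounded nonempty open region contained in $W$; in particular its closure is a compact polytope $\Psi$ all of whose boundary faces lie on faces of $C(g)$, and $\Psi$ collapses to $\Phi$ as $W$ shrinks. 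The key point is what this forces combinatorially: for $z$ in the relative interior of $\Phi$, the set $\B_z$ of minimal monomials of $f$ at $z$ now gets enlarged (in $g$) by the point $q$, i.e.\ $q$ lies in the polytope $\tilde\B_z$ — because the old minimal monomials of $f$ at $z$ are all still equal to $f(z)=g(z)$ there (they were minimal, and $g\le f$ with equality attained), and now $q\cdot z+a_q$ equals that common value too along (the closure of) $\Phi$.

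The heart of the argument is then to show $q\in\tilde\B_z$ is \emph{not} a vertex of $\tilde\B_z$, contradicting the mild-singularity hypothesis. I would do this by a height/convexity computation on the extended Newton polytope. The monomials minimal at an interior point $z_0$ of $\Phi$ are exactly those $(p,a_p)$ lying on the lower face of $\tilde N(f)$ supported by the affine functional $\ell(p,r)=p\cdot z_0 - r$ (this is the standard duality recalled in the excerpt). For $g$ to coincide with $f$ outside a neighborhood of $\Phi$, the point $(q,a_q)$ must be added to $\tilde N(g)$ \emph{without} changing the lower hull over any $z$ bounded away from $\Phi$; concretely, $q\cdot z+a_q \ge f(z)$ for all $z\notin W$. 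Letting $W$ shrink, this pins $(q,a_q)$ to lie on (or above) the supporting hyperplane of the face $\tilde\B_\Phi$ of the old subdivision, i.e.\ $(q,a_q)$ is in the affine span of $\tilde\B_\Phi$ with $q$ in the convex hull of $\B_\Phi$ — otherwise either the monomial $q$ would never become active near $\Phi$, or it would be active on an open set reaching outside $W$ (for small $W$), a contradiction. So $q$ lies in the lattice polytope $\tilde\B_\Phi$; since $q$ must be an honest lattice point of that polytope, mildness forces $q$ to be one of its vertices, and hence $q$ already corresponds to a monomial of $f$ that is minimal on an open set (Remark~\ref{rem_mild}) — but then $\min(f,q\cdot z+a_q)$ either equals $f$ (if $a_q$ is at least the canonical coefficient) or differs from $f$ on that whole open set, which is not contained in a small neighborhood of $\Phi$. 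Either way we reach a contradiction.

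The step I expect to be the main obstacle is making precise the claim that ``$g$ agrees with $f$ outside a small neighborhood of $\Phi$'' forces $(q,a_q)$ into the supporting hyperplane of the face $\tilde\B_\Phi$ and $q$ into $\tilde\B_\Phi$ itself. One has to rule out the possibility that $q$ is some far-away lattice point whose monomial dips below $f$ only on a tiny lens near $\Phi$ yet stays above $f$ everywhere else: a priori a very steep linear function could do strange things. The clean way to handle this is to note that $\{z: q\cdot z+a_q < f(z)\}$ is convex (intersection of the convex sets $\{q\cdot z+a_q< p\cdot z+a_p\}$ over the finitely many locally relevant $p$), it is nonempty, and it is contained in the small neighborhood $W$ of $\Phi$; letting $W$ shrink, this convex set shrinks to a subset of $\Phi$, and for $z$ on $\Phi$ we get $q\cdot z+a_q \le f(z)$ while on a dense set near $\partial\Phi$ we get $q\cdot z+a_q\ge f(z)$, whence $q\cdot z+a_q=f(z)$ on $\Phi$; this equality on the full-dimensional-within-its-span face $\Phi$ is exactly the statement that $(q,a_q)$ lies on the supporting hyperplane of $\tilde\B_\Phi$, so $q\in\tilde\B_\Phi\cap\ZZ^n$. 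After that, mildness of $\tilde\B_\Phi$ and Remark~\ref{rem_mild} finish the proof with no further difficulty.
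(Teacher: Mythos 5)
Your argument is correct and follows essentially the same route as the paper's proof: first force $q$ into the convex hull $\tilde\B_\Phi$ via a separation/convexity argument (if $q$ were separated from $\mathrm{conv}(\B_\Phi)$, the region $\{q\cdot z+a_q<f(z)\}$ would escape any small neighborhood of $\Phi$), then invoke mildness of $\tilde\B_\Phi$ to reach a contradiction. Your write-up is in fact slightly more complete than the paper's, since you also dispose explicitly of the case where $q$ is a vertex of $\tilde\B_\Phi$; just note that in your final paragraph the inference ``$(q,a_q)$ lies on the supporting hyperplane, so $q\in\tilde\B_\Phi$'' is not by itself valid and must be read together with the convex-hull step you gave earlier.
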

\begin{proof}
If such $q$ could exists, it would mean that $q$ belongs to a convex hull of $\tilde \B_z, z\in \Phi$ and does not coincide with any of its vertices, which is a contradiction. Indeed, if $q$ can be separated from the convex set $\tilde \B_z$ by a hyperplane, then moving from $z$ in the direction orthogonal to this hyperplane, we would get that $qz+a_q$ decreases faster then all monomials in $f(z)$, and therefore it is not true that $\min (f(z),qz+a_q)$ coincides with $f$ outside of a small neighborhood of $\Phi$, as required.
\end{proof}

\begin{definition}
\label{def_qpolygon}
Let $\Delta\subset\RR^n$ be a finite non-empty intersection of half-spaces (at least one) with normals in $\ZZ^n$.
We call $\Delta$ a {\it $\QQ$-polytope} if it has non-empty interior.
\end{definition}

\begin{lemma}
\label{lemma_mild}
Let $\Delta\subset \RR^n$ be a $\QQ$-polytope whose all faces are mild. Let $f$ be a $\Delta$-tropical series such that $C(f)$ has only mild singularities. Let $p\in \Delta\setminus C(f)$. Let $G_pf= \Add_q^cf$. Then, for each $t\in[0,1)$ the $\Delta$-tropical series $\Add_q^{ct}f$ has only mild singularities.
\end{lemma}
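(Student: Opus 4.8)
The plan is to track, as $t$ grows from $0$ to $1$, how the combinatorial structure dual to $C(\Add_q^{ct}f)$ evolves, and to argue that no non-mild cell can ever appear. Write $f_t = \Add_q^{ct}f$, so $f_0 = f$ and $f_1 = G_p f$. By Lemma~\ref{lem_singlegp} applied to $f$, as $t$ increases the face $\Phi$ of $f$ containing $p$ (with dominating monomial $q\cdot z + a_q$) shrinks monotonically, and the coefficient $a_q + ct$ increases linearly; for $t < 1$ the point $p$ is still in the interior of the (shrunken) face $\Phi_t$ whose dominating monomial is still $q$, while for $t = 1$ the face $\Phi_t$ has collapsed so that $p \in C(f_1)$. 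The key structural fact is that the Newton data of $f_t$ is obtained from that of $f$ by raising a single point $q$ of the extended Newton polytope, so the only cells of the dual subdivision that can change are those whose closure contains $q$.

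The main step is to analyze which lattice points $q' \in \tilde\B$ actually carry a face of $C(f_t)$ for $t \in [0,1)$, i.e. which monomials are dominating somewhere on an open set. Since $f$ has only mild singularities, by Remark~\ref{rem_mild} every $q' \in \tilde\B$ is dominating on some open subset of $\Delta$ for $f = f_0$. For $q' \ne q$ this property is preserved for all $t \le 1$: raising the coefficient of the single monomial $q$ can only enlarge the region where $q'$ dominates, except possibly near $\Phi$, but since $f$ had mild singularities the cell of $q'$ adjacent to $\Phi$ persists; more carefully, for $t<1$ the monomial $q$ still dominates near $p$, so the combinatorial type of the subdivision can only change by edges/faces sweeping across $\Phi_t$, and every $q'\ne q$ keeps a nonempty open dominance region because $\Delta$ has mild faces, hence $l_\Delta$ and the bounding monomials of $f$ keep their cells away from $p$. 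Thus for every $t < 1$ and every $q' \in \tilde\B = \tilde\B_{f_t}$ the monomial $q'$ of $f_t$ is dominating on an open set, which by Remark~\ref{rem_mild} is exactly the statement that $f_t$ has only mild singularities. The hypothesis that all faces of $\Delta$ are mild enters precisely here: it guarantees that the monomials of $f_t$ that dominate near $\partial\Delta$ (coming from the supporting hyperplanes of $\Delta$, i.e. the $l^{q}_\Delta$) also contribute open cells, so the shrinking of $\Phi$ toward $p$ cannot push a boundary cell out of existence and create a non-vertex lattice point inside some $\tilde\B_z$.

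I expect the main obstacle to be the bookkeeping at $t$ close to $1$, where $\Phi_t$ degenerates and several faces of $C(f_t)$ collide at $p$: one must check that throughout this collision every intermediate cell $\tilde\B_z$ remains a polytope all of whose lattice points are vertices, even though at $t=1$ the cell $\tilde\B_p$ may be large. The cleanest way around this is to prove the dichotomy ``for $t<1$, $q$ is dominating on an open set, hence every $q'\in\tilde\B$ is'' directly from Lemma~\ref{lem_singlegp} (which gives $c = g(p) - q\cdot p$ and hence $f_t$ smooth at $p$ for $ct < c$), and then invoke Remark~\ref{rem_mild} rather than arguing cell-by-cell; the mildness of the faces of $\Delta$ then only needs to be used to handle the finitely many ``boundary'' monomials, which can be isolated using Lemma~\ref{lemma_estimate} to reduce to a tropical polynomial on a compact neighborhood of each point of $C(f_t)$.
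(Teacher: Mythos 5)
Your proposal is correct and follows essentially the same route as the paper: the paper also observes that raising the single coefficient $a_q$ preserves the set of vertices of the extended Newton polytope for $t<1$ (equivalently, every monomial of $\tilde\B$, including $q$ itself, keeps a nonempty open dominance region until the face of $q$ contracts at $t=1$), and concludes via the characterization of mildness in Remark~\ref{rem_mild}. The only cosmetic difference is that your detour about cells ``adjacent to $\Phi$'' and the role of the mild faces of $\Delta$ is unnecessary — the clean monotonicity argument you state (increasing $a_q$ only enlarges the dominance region of every $q'\ne q$) already suffices, which is exactly what the paper's proof records.
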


\begin{proof}
Consider the changes of the extended Newton polytope $\tilde N(f)$ of $f$ while applying $\Add_q^{ct}$ to $f$. The coefficient of $q$ grows so the  vertex of  $\tilde N(f)$ corresponding to $q$ goes up and there can be some changes of $\tilde N(f)$, but the set of vertices is preserved. Therefore no lattice point can be found inside the sets $\tilde B_V$ except their vertices, therefore $\Add_q^{ct}f$ has only mild singularities.
Note that the point $(q,a_q+ tc)$ may fail to be a vertex of $\tilde N(f)$ only if its corresponding face in $C(f)$ contracts, and this can happen only at $t=1$. 
\end{proof}

\begin{corollary}
The only case when $G_pf$ does not have only mild singularities is when the monomial $q$ does not contribute to $G_pf$ in the sense that the set of points $z$ where $qz+a_q$ is strictly less then all the other monomials in $G_pf$ is empty.
\end{corollary}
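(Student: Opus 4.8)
The corollary is essentially the contrapositive of Lemma~\ref{lemma_mild} together with its final remark. So the plan is to argue: if the monomial $q$ \emph{does} contribute to $G_pf$ — meaning the set of points where $qz+a_q$ is strictly below all other monomials of $G_pf$ has non-empty interior — then $G_pf$ has only mild singularities; the only way mildness can fail is therefore the exceptional case in the statement. Concretely, write $G_pf=\Add_q^cf$ as in Lemma~\ref{lem_singlegp}, and consider the flow $\Add_q^{ct}f$ for $t\in[0,1]$. By Lemma~\ref{lemma_mild}, for every $t\in[0,1)$ the series $\Add_q^{ct}f$ has only mild singularities; the proof of that lemma shows the \emph{only} thing that can go wrong at $t=1$ is that the point $(q,a_q+c)$ ceases to be a vertex of the extended Newton polytope $\tilde N(\Add_q^{c}f)$, which happens precisely when the face of $C(\Add_q^{ct}f)$ corresponding to $q$ contracts to nothing as $t\to 1$.

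\textbf{Key steps.} First I would record that ``$q$ contributes to $G_pf$'' is, by Remark~\ref{rem_mild} applied to the single monomial $q$, equivalent to ``$(q,a_q+c)$ is a vertex of $\tilde N(G_pf)$'' (there is an open set of $\Omega$ on which $qz+a_q+c$ is the unique minimal monomial). Second, I would invoke the last sentence of the proof of Lemma~\ref{lemma_mild}: all lattice points of every cell $\tilde B_V$ of the dual subdivision are vertices throughout the flow, and the only cell whose vertex-set is in danger at $t=1$ is the one containing $q$. Hence if $(q,a_q+c)$ remains a vertex at $t=1$, no cell of the subdivision of $\tilde N(G_pf)$ contains an interior or edge lattice point, i.e.\ $G_pf$ has only mild singularities. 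Third, contrapositive: if $G_pf$ fails to have only mild singularities, then $(q,a_q+c)$ is not a vertex of $\tilde N(G_pf)$, i.e.\ $q$ does not contribute — which is exactly the asserted characterization.

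\textbf{Main obstacle.} The routine points are harmless; the one genuine subtlety is verifying that, among all the combinatorial changes that the subdivision undergoes during the flow $t\mapsto\Add_q^{ct}f$, the \emph{only} one that can create a non-mild cell is the disappearance of the $q$-cell at $t=1$. This is where one leans on the structure established in Lemma~\ref{lemma_mild}: the set of vertices of $\tilde N(\Add_q^{ct}f)$ is constant for $t<1$, so any newly formed maximal cell is a union of old mild cells glued along a shared face, and such a union, being itself a cell of a subdivision whose vertices are a subset of the original (mild) vertex set, contains no new interior lattice points — the point $q$ being the single exception once it is no longer extremal. I would spell this out in one or two sentences and otherwise cite Lemma~\ref{lemma_mild} and its proof directly.

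\begin{proof}
If the monomial $q$ contributes to $G_pf=\Add_q^cf$, then by Remark~\ref{rem_mild} the point $(q,a_q+c)$ is a vertex of the extended Newton polytope $\tilde N(G_pf)$. Consider the flow $\Add_q^{ct}f$, $t\in[0,1]$, of Remark~\ref{rem_smooth}. By Lemma~\ref{lemma_mild}, for every $t\in[0,1)$ the series $\Add_q^{ct}f$ has only mild singularities, and the proof of that lemma shows that the set of vertices of $\tilde N(\Add_q^{ct}f)$ is independent of $t$ for $t<1$, the point $(q,a_q+tc)$ being a vertex throughout; the only way a cell of the dual subdivision can acquire a lattice point other than a vertex at $t=1$ is if $(q,a_q+c)$ fails to be a vertex of $\tilde N(G_pf)$, which happens exactly when the face of $C(\Add_q^{ct}f)$ corresponding to $q$ contracts as $t\to 1$. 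Since we assumed $(q,a_q+c)$ remains a vertex, no cell $\tilde B_V$ of the subdivision of $\tilde N(G_pf)$ contains a lattice point except its vertices, i.e.\ $G_pf$ has only mild singularities. Taking the contrapositive: if $G_pf$ does not have only mild singularities, then $(q,a_q+c)$ is not a vertex of $\tilde N(G_pf)$, which means precisely that the set of points $z$ where $qz+a_q+c$ is strictly less than all other monomials of $G_pf$ is empty.
\end{proof}
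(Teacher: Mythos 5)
Your proof is correct and follows exactly the route the paper intends: the corollary is stated without proof precisely because it is the contrapositive of the last sentence of the proof of Lemma~\ref{lemma_mild} (if $q$ still contributes, then $(q,a_q+c)$ remains a vertex of the extended Newton polytope, the vertex set is preserved at $t=1$, and the same argument as for $t<1$ gives mildness). Your elaboration of why vertex-set preservation rules out non-vertex lattice points in the merged cells is sound and adds nothing inconsistent with the paper's reasoning.
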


\section{Approximations of a compact convex domain by $\QQ$-polytopes}
\label{sec_exhausting}

\begin{definition}
\label{def_minimalform}
We say that a tropical series $f$ on $\Omega$ is presented in the {\it small canonical form} if $f$ is written as 
\begin{equation}
f(z) = \min_{q\in\B_f}(q\cdot z+a_{q})
\end{equation}
where all $a_{q}$ are taken from the canonical form and $\B_f$ consists of monomials $qz+a_{q}$ which are equal to $f$ at least one point in $\Omega^\circ$.
\end{definition}

\begin{example}
The small canonical form for Example~\ref{ex_bigform} is $\min(x,y,1-x,1-y,1/3)$.
\end{example}

\begin{definition}
\label{def_fOmegaP}
Let $\p_1,\dots \p_n\in \Omega^\circ$ be different points, $P=\{\p_1,\dots,\p_n\}$. We denote by $f_{\Omega,P}$ the pointwise minimum among all $\Omega$-tropical
series non-smooth at all the points $\p_1,\dots,\p_n$.
\end{definition}

\begin{lemma}
\label{lemma_levelsets}
If $\Omega$ is bounded, then for any $\e>0$ the level-set $\Omega_\e=\{x\in\Omega|f_{\Omega,P}\geq\e\}$ is a $\QQ$-polytope and $f_{\Omega,P}|_{\Omega_\e}$ is a tropical polynomial. 
\end{lemma}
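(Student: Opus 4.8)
The plan is to show that $\Omega_\e$ is cut out by finitely many rational half-spaces and that the series restricted to it involves only finitely many monomials, the two facts being essentially the same finiteness statement coming from Lemma~\ref{lemma_estimate}.

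First I would recall that $f_{\Omega,P}=G_P 0_\Omega$ by Definition~\ref{def_fOmegaP} and Definition~\ref{def_gp}, and that by Proposition~\ref{prop_upperbound} we have $f_{\Omega,P}\le n\cdot l_\Omega$. Write $f_{\Omega,P}$ in its canonical form $f_{\Omega,P}(z)=\min_{q\in\A_\Omega}(q\cdot z+a_q)$. The key observation is that on the set where $f_{\Omega,P}\ge\e$ only finitely many of these monomials can ever attain the minimum, or even come close: indeed, since $\Omega$ is bounded, $\A_\Omega=\ZZ^n$ and $q\cdot z+a_q\ge 0$ on $\Omega$ for each $q$; if $q\cdot z_0+a_q\le A$ for some $z_0\in\Omega_\e$ then applying Lemma~\ref{lemma_estimate} with a fixed compact neighborhood $K\subset\Omega^\circ$ containing $\Omega_\e$ (such a $K$ exists because $f_{\Omega,P}$ vanishes on $\partial\Omega$, so $\Omega_\e$ is a compact subset of $\Omega^\circ$) shows that $q$ ranges over a finite set $\mathcal M$. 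Here I must first check that $\Omega_\e\subset\Omega^\circ$: since $f_{\Omega,P}|_{\partial\Omega}=0<\e$ and $f_{\Omega,P}$ is continuous, $\Omega_\e$ is closed and disjoint from $\partial\Omega$, hence compact in $\Omega^\circ$.

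Next, on $K$ we may write $f_{\Omega,P}=\min_{q\in\mathcal M}(q\cdot z+a_q)$ (the monomials outside $\mathcal M$ are $\ge A$ everywhere on $\Omega_\e$ while $f_{\Omega,P}\le nl_\Omega\le A$ there for suitable $A$, so they are irrelevant — I would pick $A:=\max_K n l_\Omega$). Thus on a neighborhood of $\Omega_\e$, $f_{\Omega,P}$ agrees with the tropical polynomial $\min_{q\in\mathcal M}(q\cdot z+a_q)$, proving the second assertion. For the first assertion, $\Omega_\e=\{z\in\Omega\mid \min_{q\in\mathcal M}(q\cdot z+a_q)\ge\e\}=\Omega\cap\bigcap_{q\in\mathcal M}\{z\mid q\cdot z+a_q\ge\e\}$, a finite intersection of half-spaces with normals $q\in\ZZ^n$; intersecting further with $\Omega$ is harmless only if $\Omega$ itself is such an intersection — but we do not know that. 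However, $\Omega_\e$ is a subset of $\Omega^\circ$ on which $f_{\Omega,P}$ equals the tropical polynomial, so near any boundary point of $\Omega_\e$ the defining inequalities are exactly $q\cdot z+a_q\ge\e$ for $q\in\mathcal M$; hence $\Omega_\e=\bigcap_{q\in\mathcal M}\{z\mid q\cdot z+a_q\ge\e\}$ outright (points of this intersection lying outside $\Omega$ cannot occur, since such a point $z$ would have $f_{\Omega,P}$-value, read off the tropical polynomial, at least $\e>0$, yet any point on a ray from $\Omega^\circ$ leaving $\Omega$ crosses $\partial\Omega$ where the value is $0$, contradicting concavity of the tropical polynomial along that ray). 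This exhibits $\Omega_\e$ as a $\QQ$-polygon provided its interior is non-empty, which holds as long as $\e<\max_\Omega f_{\Omega,P}$.

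The main obstacle I anticipate is the bookkeeping around the boundary: making precise that the finitely many monomials $\mathcal M$ selected on a compact neighborhood of $\Omega_\e$ genuinely describe $\Omega_\e$ globally, i.e. that no point outside $\Omega$ sneaks into the half-space intersection and that $\Omega_\e$ has non-empty interior (one should probably state the lemma with the tacit assumption $\e$ small enough, or note $\Omega_\e=\varnothing$ is vacuously fine while a single-point $\Omega_\e$ is a degenerate case to exclude). The concavity argument that kills exterior points — a tropical polynomial is concave, vanishes on $\partial\Omega$, so cannot stay $\ge\e$ along a segment exiting $\Omega$ — is the crux and should be spelled out carefully, perhaps by invoking Remark~\ref{rem_lomegaestimate} and the bound $f_{\Omega,P}\le n l_\Omega$ which already forces the value to be $0$ on $\partial\Omega$.
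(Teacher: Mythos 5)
Your proof is correct and reaches the same two conclusions, but the finiteness mechanism is different from the paper's. The paper argues by contradiction on the combinatorics of the hypersurface: if infinitely many monomials contributed on $\Omega_\e$, a sequence of vertices of $C(f_{\Omega,P})\cap\Omega_\e$ would accumulate at an interior point, contradicting the local-polynomial requirement of Definition~\ref{def_tropseries}; finiteness of vertices then yields finiteness of monomials. You instead invoke Lemma~\ref{lemma_estimate} directly on a compact neighborhood $K$ of $\Omega_\e$ (using $f_{\Omega,P}\le n\,l_\Omega$ as the bound $A$), which is more quantitative and arguably cleaner --- it is the same lemma the paper itself uses to prove Lemma~\ref{lemma_tropicalseries}, so nothing circular. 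You also do something the paper's proof skips entirely: you actually verify the $\QQ$-polygon claim by showing $\Omega_\e=\bigcap_{q\in\mathcal M}\{q\cdot z+a_q\ge\e\}$ with no residual intersection with $\Omega$; the paper only proves the ``tropical polynomial'' half and leaves the polyhedrality implicit.

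One small repair in your exterior-point argument: concavity applies to the truncated polynomial $h=\min_{q\in\mathcal M}(q\cdot z+a_q)$, and $h$ need not vanish on $\partial\Omega$ --- only $f_{\Omega,P}$ does, and $h\ge f_{\Omega,P}$ with equality guaranteed only on $K$. So ``the segment crosses $\partial\Omega$ where the value is $0$'' is not quite right. The correct version is shorter: if $w$ satisfies $h(w)\ge\e$ but $w\notin\Omega_\e$, take $z_1\in\Omega_\e$ and follow the segment $[z_1,w]$; immediately after it leaves the closed set $\Omega_\e$ it is still inside $K^\circ$ (since $\Omega_\e\subset K^\circ$), where $h=f_{\Omega,P}<\e$, contradicting $h\ge\min(h(z_1),h(w))\ge\e$ along the segment. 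With that substitution your argument is complete; the caveats about $\Omega_\e$ being empty or having empty interior are fair and apply equally to the paper's statement.
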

\begin{proof}
Note that $G_P 0_\Omega=f_{\Omega,P}(x)$ by the definition of the latter, so it follows from Lemma~\ref{lemma_tropicalseries} that $f_{\Omega,P}$ is continuous and vanishes at $\partial\Omega$. Since $\Omega$ is bounded, the level-set  $f_{\Omega,P}=\e$ is a hypersurface disjoint
from $\partial\Omega.$ We claim that the intersection of $\Omega_\e$ with $C(f_{\Omega,P})$ is a tropical hypersurface with a finite number of vertices.  Suppose the contrary. Then a sequence of vertices of this hypersurface converges to a point $z\in\Omega^\circ.$ Thus, there is no neighborhood of $z$ in which $f_{\Omega,P}$ can be locally represented by a tropical
polynomial, which is a contradiction with Definition~\ref{def_tropseries}.  The finiteness of the number of vertices implies that there is only a finite number of monomials participating in the restriction of $f_{\Omega,P}$ to the domain $\Omega_\e,$ therefore the restriction is a tropical polynomial.
\end{proof}

\begin{lemma}
\label{lemma_levelset2}
In the above hypothesis, we extend $f_{\Omega_\e,P}$ to $\Omega$ using the presentation of $f_{\Omega_\e,P}$ in the small canonical form (Definition~\ref{def_minimalform}). 
In the hypothesis of the previous lemma, if
$f_{\Omega,P}(\p)\geq\e$ for each $\p\in P$, then  we have $f_{\Omega,P}=f_{\Omega_\e,P}+\e$ on $\Omega_\e$. Also $f_{\Omega_\e,P}+\e\geq f_{\Omega,P}$ on $\Omega$.
\end{lemma}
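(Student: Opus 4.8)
The plan is to compare the two obstacle problems defining $f_{\Omega,P}$ and $f_{\Omega_\e,P}$ on the region $\Omega_\e$, using that $G_P 0 = f_{\Omega,P}$ and the analogous identity on $\Omega_\e$. First I would establish the inequality $f_{\Omega_\e,P}+\e\geq f_{\Omega,P}$ on all of $\Omega$. The function $f_{\Omega_\e,P}$, extended to $\Omega$ by its small canonical form, is a tropical series; adding $\e$ we get a nonnegative tropical series which is $\geq \e > 0 = f_{\Omega,P}$ on $\partial\Omega$ and which is still not smooth at each $\p\in P$ (adding a constant does not change smoothness, and $\p\in\Omega_\e$ because $f_{\Omega,P}(\p)\geq\e$). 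Hence $f_{\Omega_\e,P}+\e$ lies in the class over which $f_{\Omega,P}$ is the pointwise infimum — well, not quite, since it need not vanish on $\partial\Omega$ — so instead I would argue via monotonicity: $f_{\Omega_\e,P}+\e \geq 0_\Omega$, hence by Lemma~\ref{lem_gpmponotone} $G_P(f_{\Omega_\e,P}+\e)\geq G_P 0_\Omega = f_{\Omega,P}$, and since $f_{\Omega_\e,P}+\e$ is already in $V(\Omega,P,0_\Omega)$ up to the boundary condition one checks $G_P(f_{\Omega_\e,P}+\e)=f_{\Omega_\e,P}+\e$, giving the claimed bound. The cleanest route is probably to note directly that $f_{\Omega_\e,P}+\e$ is non-smooth at $P$ and $\geq f_{\Omega,P}$ on $\partial\Omega$, then apply the comparison/minimality argument in the style of Lemma~\ref{lemma_usualisomegatropical}.

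For the reverse inequality on $\Omega_\e$, namely $f_{\Omega,P}\leq f_{\Omega_\e,P}+\e$ there, I would use that $f_{\Omega,P}-\e$, restricted to $\Omega_\e$, is a nonnegative function vanishing on $\partial\Omega_\e=\{f_{\Omega,P}=\e\}$ (by Lemma~\ref{lemma_levelsets}, $\Omega_\e$ is a $\QQ$-polygon and this restriction is a tropical polynomial, hence an $\Omega_\e$-tropical series by Lemma~\ref{lemma_usualisomegatropical}), and which is non-smooth at each $\p\in P\subset\Omega_\e^\circ$. By Definition~\ref{def_fOmegaP} of $f_{\Omega_\e,P}$ as the pointwise minimum of all such series, we get $f_{\Omega_\e,P}\leq f_{\Omega,P}-\e$ on $\Omega_\e$, i.e. $f_{\Omega,P}\geq f_{\Omega_\e,P}+\e$. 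Combined with the previous paragraph restricted to $\Omega_\e$, this yields the equality $f_{\Omega,P}=f_{\Omega_\e,P}+\e$ on $\Omega_\e$.

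The step I expect to be the main obstacle is the careful handling of the boundary condition when extending $f_{\Omega_\e,P}$ from $\Omega_\e$ to $\Omega$ via the small canonical form: one must check that this extension is genuinely a tropical series on $\Omega^\circ$ (no new non-smooth locus or blow-up appears between $\partial\Omega_\e$ and $\partial\Omega$), that it stays nonnegative there, and that it does not accidentally acquire smoothness at some $\p\in P$. The small canonical form only retains monomials attaining the minimum somewhere in $\Omega_\e^\circ$, so outside $\Omega_\e$ the function is a genuine finite minimum of affine functions and continuity and the tropical-series property are automatic; the subtlety is only that the value may a priori exceed what one wants, which is exactly why the statement claims $f_{\Omega_\e,P}+\e\geq f_{\Omega,P}$ on $\Omega$ (an inequality, not equality) outside $\Omega_\e$. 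Making this bookkeeping precise, rather than any deep idea, is where the work lies.
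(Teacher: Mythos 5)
Your first half is correct and coincides with the paper's argument: $(f_{\Omega,P}-\e)|_{\Omega_\e}$ is an $\Omega_\e$-tropical series (nonnegative, vanishing on $\partial\Omega_\e$, and a tropical polynomial by Lemma~\ref{lemma_levelsets}) which is non-smooth at every point of $P$, so the minimality in Definition~\ref{def_fOmegaP} gives $f_{\Omega,P}-\e\geq f_{\Omega_\e,P}$ on $\Omega_\e$.

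The gap is in the other inequality, $f_{\Omega_\e,P}+\e\geq f_{\Omega,P}$, and it is not mere bookkeeping. Your monotonicity route fails at the identity $G_P(f_{\Omega_\e,P}+\e)=f_{\Omega_\e,P}+\e$: by Definition~\ref{def_vOmegaH} the competitors in $V(\Omega,P,h)$ are $\Omega$-tropical series, hence vanish on $\partial\Omega$, so nothing in that class can dominate a function that is $\geq\e$ on $\partial\Omega$; the set $V(\Omega,P,f_{\Omega_\e,P}+\e)$ is empty and $G_P$ applied to this function is $\inf\varnothing$, not the function itself. Your fallback, a ``comparison argument in the style of Lemma~\ref{lemma_usualisomegatropical}'', is not an argument: that lemma is not a comparison principle, and the hypothesis you would need to feed it --- that the small-canonical-form extension of $f_{\Omega_\e,P}$ stays above $f_{\Omega,P}-\e$ on $\Omega\setminus\Omega_\e$ --- is exactly the nontrivial point. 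You also misidentify the danger: the worry is not that the extension ``may exceed what one wants'' but that it may be too \emph{small}; its monomials are only constrained to be nonnegative on $\Omega_\e$ and can a priori become negative, hence drop below $f_{\Omega,P}$, before reaching $\partial\Omega$. The paper closes this by a slope (``quasi-degree'') comparison: from $f_{\Omega,P}-\e\geq f_{\Omega_\e,P}$ on $\Omega_\e$ together with equality of $f_{\Omega_\e,P}+\e$ and $f_{\Omega,P}$ on $\partial\Omega_\e$, the outward rates of decrease of $f_{\Omega_\e,P}$ across $\partial\Omega_\e$ are bounded by those of $f_{\Omega,P}-\e$, and since both functions are minima of affine functions this propagates over all of $\Omega\setminus\Omega_\e$, giving $f_{\Omega_\e,P}+\e\geq f_{\Omega,P}\geq 0$ there; with nonnegativity of $f_{\Omega_\e,P}+\e$ on all of $\Omega$ in hand, $\min(f_{\Omega_\e,P}+\e,\,f_{\Omega,P})$ becomes a legitimate competitor in Definition~\ref{def_fOmegaP}, which yields the inequality on $\Omega$ and hence the equality on $\Omega_\e$. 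That slope-comparison step is the actual content of the lemma, and it is the step your proposal omits.
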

\begin{proof}
On $\Omega_\e$ we have that $f_{\Omega,P}-\e\geq f_{\Omega_\e,P}$ by the definition of the latter. Then, two functions $f_{\Omega_\e,P}+\e, f_{\Omega,P}$ are equal on $\partial\Omega_\e$ and by the previous line the quasi-degree of $f_{\Omega_\e,P}$ is at most the quasi-degree of $(f_{\Omega,P}-\e)|_{\Omega_\e}$. Hence $f_{\Omega,P}$ can not decrease slowly than $f_{\Omega_\e,P}$ when we move from $\partial\Omega_\e$ towards $\partial\Omega$. Therefore $f_{\Omega_\e,P}+\e\geq f_{\Omega,P}$ on $\Omega\setminus\Omega_\e$. Since $f_{\Omega_\e,P}+\e\geq 0$ on $\Omega$ we obtain the estimate $f_{\Omega_\e,P}+\e\geq f_{\Omega,P}$ on $\Omega$ which concludes the proof. 
\end{proof}

\section{The main theorem}
\label{sec_main}
Let $\Omega\subset \RR^n$ be a compact convex domain and $P=\{p_1,\dots,p_m\}\subset\Omega^{\circ}$ a set of points. As we know, $G_P0_\Omega$ can be obtained as the limit of a continuous family of shrinking operators (Proposition~\ref{prop_gpsconvergence}, Lemma~\ref{lem_singlegp}, Remark~\ref{rem_smooth}),

$$G_P0_\Omega = (\prod\limits_{i=1}^\infty \Add_{w_i}^{t_i} )0_{\Omega},$$
where $w_i\in P, t_i\in\RR_{\geq 0}$.

\begin{theorem}
For each $\e>0$  there exist $\delta>0$, a $\QQ$-polytope $Q\subset \Omega$, a $Q$-tropical series $g$, and a natural number $N$ such that $g(z)<\delta,\forall z\in Q$ and $C(g)$ has only mild singularities. Moreover, a continuous operator $$F=\prod\limits_{i=1}^N \Add_{w_i}^{t_i-\delta},$$  produces $Q$-tropical series $F(g): Q\to \RR_{\geq 0}$ which is  $\e$-close to $G_P0_\Omega$ on $\Omega$ and during computation of $F(g)$ (as the flow version of a composition of shrinking operators) all appearing $Q$-tropical hypersurfaces have only mild singularities.
\end{theorem}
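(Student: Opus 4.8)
The plan is to build the $\QQ$-polytope $Q$ as a level set $\Omega_\eta = \{z \in \Omega \mid l_\Omega(z) \geq \eta\}$ for a small parameter $\eta$, which is a $\QQ$-polytope by (the argument of) Lemma~\ref{lemma_levelsets} applied to $l_\Omega = f_{\Omega,\varnothing}$, or more robustly to $f_{\Omega,P}$. First I would fix $\eta$ small enough that (i) $P \subset \Omega_\eta^\circ$ and $f_{\Omega,P}(\p_i) > \eta$ for all $i$, and (ii) by a further small perturbation of the defining half-spaces of $\Omega_\eta$ — moving each supporting hyperplane outward by a tiny rational amount and then refining directions via Dirichlet simultaneous approximation, exactly as in the proof of Lemma~\ref{lemma_lomegaiszero} — all faces of $Q := \Omega_\eta$ become mild in the sense of Definition~\ref{def_smoothcorner}. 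The point is that genericity of the outward translations makes every vertex of $Q$ the intersection of exactly $n$ facets whose primitive normals form a basis of $\ZZ^n$, hence all faces are mild; $Q \subset \Omega$ can be arranged since we started from the interior level set. Set $g$ to be a $Q$-tropical series representing $l_Q$ (or $0_Q$) scaled so that $g < \delta$ on $Q$ for a $\delta$ to be chosen; in fact taking $g = 0_Q$ works once we note $0_Q$ trivially has only mild singularities (empty corner locus), so the "$g < \delta$" requirement is automatic.

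Next I would transport the dynamic. By Proposition~\ref{prop_gpsconvergence} and Lemma~\ref{lem_singlegp}, $G_P 0_\Omega = (\prod_{i=1}^\infty \Add_{w_i}^{t_i}) 0_\Omega$, and only finitely many distinct monomials $q$ ever get their coefficients changed (Lemma~\ref{lemma_estimate}); moreover the partial products converge uniformly, so there is $N$ with $\rho\big((\prod_{i=1}^N \Add_{w_i}^{t_i})0_\Omega,\ G_P 0_\Omega\big) < \e/4$, hence by Lemma~\ref{lemma_ecloseseries} the corresponding curves are $\e/2$-close. Now run the \emph{same} finite schedule on $Q$ instead of $\Omega$, with each increment reduced to $t_i - \delta$ (distributing a total budget $< \e/4$ of shrinkage among the $N$ steps, i.e. $\delta = \e/(4N)$ or interpreting $t_i - \delta$ as "stop slightly short of $t_i$"): define $F(g) = (\prod_{i=1}^N \Add_{w_i}^{t_i - \delta}) g$. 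Because $Q \subset \Omega$ and $l_Q \leq l_\Omega$ on $Q$ (Remark~\ref{rem_lomegaestimate}), and because the operators $\Add_q^c$ act on the same finite monomial set, the coefficients produced on $Q$ differ from those on $\Omega$ only through the $c_q$-shifts coming from the boundary, which are $O(\eta)$-controlled; choosing $\eta$ small makes $F(g)$ uniformly $O(\eta + \e/4)$-close to the $N$-th partial product on $\Omega$, hence $\e$-close to $G_P 0_\Omega$ on $\Omega$ by the triangle inequality together with Lemma~\ref{lem_distance} and Lemma~\ref{lemma_ecloseseries}.

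The heart of the argument — and where Lemma~\ref{lemma_mild} does the work — is that during \emph{every} intermediate stage the $Q$-tropical hypersurface has only mild singularities. Here I would proceed by induction on the schedule: $C(g) = C(0_Q)$ is empty, trivially mild. If $F_{k-1} := (\prod_{i=1}^{k-1}\Add_{w_i}^{t_i-\delta})g$ has only mild singularities, then by Lemma~\ref{lem_singlegp} the next step is $\Add_{w_k}^{c}$ for the appropriate $c$, and Lemma~\ref{lemma_mild} — which requires precisely that all faces of $Q$ are mild, the property we engineered in step one — guarantees that $\Add_{w_k}^{ct} F_{k-1}$ has only mild singularities for all $t \in [0,1)$. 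The only way mildness could fail is the degenerate case flagged in the Corollary after Lemma~\ref{lemma_mild}, namely the monomial $w_k$ failing to contribute; but stopping the flow at $t_i - \delta < t_i$, i.e. strictly before the face it contracts collapses, is exactly what prevents this — the perturbed endpoint never reaches the bad value $t = 1$. Splicing the finitely many flows $t \mapsto \Add_{w_k}^{(t_k-\delta)t}F_{k-1}$ end to end, rescaling time to $[0,1]$ as in the Corollary after Lemma~\ref{lemma_ecloseseries}, yields the desired continuous operator $F$ along which all curves have only mild singularities.

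The main obstacle I anticipate is step one: verifying that one can simultaneously (a) keep $Q \subset \Omega$, (b) keep $P$ in the interior with $f_{\Omega,P}(\p_i)$ bounded below, and (c) perturb the rational half-space directions so that every face of $Q$ — not just vertices, but all codimensions — is mild, while (d) keeping $Q$ close enough to $\Omega$ that the transported dynamic stays $\e$-close. Controlling (c) in dimension $\geq 3$, where "mild" for a codimension-$k$ face is a genuine lattice-geometric condition on the cone of normals rather than automatic, is the delicate point; the resolution is that a generic small rational outward translation of the facets of a simple polytope makes every normal cone unimodular, and simultaneous Diophantine approximation lets us realize such a generic translation with integer normals of bounded size, which is also what keeps the monomial set finite and the estimates in (d) uniform.
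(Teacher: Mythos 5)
You have the right skeleton (truncate $G_P0_\Omega$ near $\partial\Omega$ to get a finite schedule, run it on a slightly smaller domain, stop each $\Add$ strictly before $t=1$, and invoke Lemma~\ref{lemma_mild} inductively), and your closeness estimates via Lemma~\ref{lemma_levelset2}, Lemma~\ref{lem_distance} and Lemma~\ref{lemma_ecloseseries} are in the spirit of the paper. But there are two genuine gaps, both located exactly at the heart of the theorem, namely in how mildness is actually \emph{achieved} rather than propagated.

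First, your construction of $Q$. Translating the supporting hyperplanes of $\Omega_\eta$ outward by generic rational amounts makes the polytope simple, but it does not change the normals of the facets, hence does not change the normal cones at the faces: if two facets with primitive normals $q_1,q_2$ such that $\mathrm{conv}(0,q_1,q_2)$ contains a non-vertex lattice point still meet after the translation, the corresponding vertex is still not mild in the sense of Definition~\ref{def_smoothcorner}. Making every normal cone mild requires \emph{refining} the normal fan (adding new facet directions, a toric-resolution type operation), not perturbing the positions of the existing facets; Dirichlet approximation addresses irrational directions, not this lattice obstruction. The paper avoids this issue altogether: $Q$ is defined as $\{g=0\}$ for a tropical series $g$ whose Newton data is controlled directly.

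Second, and more seriously, the choice $g=0_Q$ breaks the induction. The invariant that actually propagates under $\Add_q^{ct}$, and that the proof of Lemma~\ref{lemma_mild} relies on, is the \emph{global} form of mildness in Remark~\ref{rem_mild}: every lattice point of the convex hull of the contributing monomials must itself contribute, i.e.\ be a vertex of the extended Newton polytope; only then is it impossible for a cell $\tilde\B_z$ created later in the flow to acquire a non-vertex lattice point. The function $0_Q$ satisfies the local definition vacuously ($C(0_Q)=\varnothing$) but violates the global one (only the constant monomial contributes), and this is not a pedantic distinction: already for $\min(ct,l_Q)$ and its successors, two contributing monomials $q_1,q_2$ on non-adjacent facets of $Q$ (say $(1,1)$ and $(-1,1)$) can have their faces become adjacent at some intermediate time $t_0<1$ once the coefficients of the monomials separating them (here $(0,1)$ and $0$) have been raised, producing a cell containing $[q_1,q_2]\ni(0,1)$ with a non-vertex lattice point --- a non-mild singularity at $t_0<1$, not prevented by the mildness of the faces of $Q$ since those facets share no face. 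This is precisely the step your proposal is missing and that the paper's proof supplies: it takes $g_1=\min(G_P0_\Omega-\e',\e'')$, lets $\B$ be the \emph{entire} intersection of the convex hull of the support with $\ZZ^n$, and slightly diminishes the coefficients of the monomials in $\B\setminus\B'$ so that every lattice point of the hull contributes from the start; only after this enrichment does Lemma~\ref{lemma_mild} propagate mildness along the whole flow.
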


We can summarize this theorem in the following diagram, where the first row is on $\Omega$ and the second row is on $Q$:

\begin{center}
\begin{tikzcd}[column sep={10em,between origins}]
  0_{\Omega} \arrow[r, "G_P=\prod\limits_{i=1}^\infty\Add_{w_i}^{t_i}"] \arrow[d, "\approx"]
    & G_P0_\Omega \\
 g \arrow[r,  "\text{only\  mild\  singularities}","F=\prod\limits_{i=1}^N\Add_{w_i}^{t_i-\delta}"'] & F(g) \arrow[u, "\approx"] \\
\end{tikzcd}

\end{center}
In other words, we need to get from $0_\Omega$ to $G_P0_\Omega$ but we wish to avoid too singular tropical hypersurfaces. Thus we slightly change the domain (we consider $Q\subset \Omega$ instead of $\Omega$), then change $0_\Omega$ to a $Q$-tropical series $g$, which is close to $0_\Omega$, then approximate $G_P$ by a family of shrinking operators avoiding too singular hypersurfaces. And, using our machinery, we prove that the result of these approximations can be arbitrarily close to $G_P0_\Omega$.

\begin{proof}

Pick an $\e>0$. As in Lemma~\ref{lemma_levelsets} choose a small $\e'$ and define $Q'=\{z\in \Omega| G_P0_\Omega (z)\geq \e'\}$. Note that $g_1=G_P0_\Omega-\e'$ is a $Q'$-tropical series on $Q'$ and has a small canonical form (Definition~\ref{def_minimalform}) on $Q'$ with a finite $\B'$, i.e.  $$g_1(z)=\min_{q\in\B'}(q\cdot z+ a_q).$$

As in Remark~\ref{rem_cut}, we see that $\e'+G_P g_1$ is equal to $G_P0_\Omega$ on $Q'$. Choose $\e''$ very small and define $g_1(z) = \min (g_1(z),\e'')$ on $Q'$.

 Let $\B$ be the intersection of the convex hull of $\B'$ with $\ZZ^n$. Write $g_1(z)$ on $Q'$ in the canonical form, and then slightly diminish coefficients corresponding to monomials in $\B\setminus B'$, the obtained function is denoted by $g$. Define $Q=\{z| g(z)=0\}$. Thus,  $Q$-tropical series $g$ is close to $g_2$ on $Q$. Also, $C(g)$ has only mild singularities by Remark~\ref{rem_mild}. 
  
By construction, $G_P g = g$ near the boundary of $Q$ (see Remark~\ref{rem_cut}), therefore $G_Pg$ is close to $G_P0_\Omega$. 

Next, choose $N$ sufficiently large so that $\prod\limits_{i=1}^N\Add_{w_i}^{t_i}g$ approximates $G_Pg$ (see Proposition~\ref{prop_gpsconvergence}). Then diminish $t_i$ by $\delta$, and by   Lemma~\ref{lemma_mild} we obtain that the flow $F(g)=\prod\limits_{i=1}^N\Add_{w_i}^{t_i-\delta}g$ contains $\QQ$-tropical hypersurfaces with mild singularities only, and if $\delta$ is small enough then $F(g)$ is close to $G_Pg$ which completes the proof.
\end{proof}

\bibliography{/Users/nikitakalinin/switchdrive/bibliography}
\bibliographystyle{AIMS}

\end{document}